\documentclass[sn-mathphys,Numbered]{sn-jnl}


\usepackage{graphicx}%
\usepackage{multirow}%
\usepackage{amsmath,amssymb,amsfonts}%
\usepackage{amsthm}%
\usepackage{mathrsfs}%
\usepackage[title]{appendix}%
\usepackage{xcolor}%
\usepackage{textcomp}%
\usepackage{manyfoot}%
\usepackage{booktabs}%
\usepackage{algorithm}%
\usepackage{algorithmicx}%
\usepackage{algpseudocode}%
\usepackage{listings}%
\usepackage{float}
\usepackage{indentfirst}
\usepackage{bbm}
\usepackage{hyperref}
\usepackage{caption}
\usepackage{subfigure} 
\usepackage{epstopdf}
\usepackage{epsfig}

\allowdisplaybreaks[4]

\theoremstyle{thmstyleone}%
\newtheorem{theorem}{Theorem}
%
\newtheorem{lemma}[theorem]{Lemma}%
\theoremstyle{thmstyletwo}%
\newtheorem{example}{Example}%
\newtheorem{remark}{Remark}%

\theoremstyle{thmstylethree}%

\raggedbottom


\begin{document}
\title[density function of solution to MVE]{Existence and smoothness of density function of solution to Mckean--Vlasov Equation with general coefficients}

\author[1]{\fnm{Boyu} \sur{Wang}}\email{wangby23@mails.jlu.edu.cn}

\author*[1]{\fnm{Yongkui} \sur{Zou}}\email{zouyk@jlu.edu.cn}

\author[1]{\fnm{Jinhui} \sur{Zhou}}\email{jhzhou21@mails.jlu.edu.cn}


\affil[1]{\orgdiv{School of Mathematics}, \orgname{Jilin University}, \orgaddress{\city{Changchun}, \postcode{130012}, \country{China}}}

\abstract{	
In this paper, we study the existence and smoothness of a density function to the solution of a Mckean-Vlasov equation with the aid of Malliavin calculus. We first show the existence of the density function under assumptions that the coefficients of equation are  only  Lipschitz continuity and satisfy a uniform elliptic condition.  Furthermore, we derive a precise regularity order and bounded a priori estimate for the density function under  optimal smoothness assumptions for the coefficients. Finally, we present several numerical experiments to illustrate the approximation of the density function independently determined by solving a Fokker-Planck equation.
}

\keywords{Mckean-Vlasov equation, density function, Malliavin calculus, Fokker-Plank equation}

\pacs[2020 Mathematics Subject Classification]{60H30, 65C30,  60H07, 60H10.}

\maketitle

\section{Introduction}
Mckean--Vlasov equation (MVE for short), where local sample trajectory properties depend on the global distribution of the solution process, is a particular type of stochastic differential equations (SDEs  for short). MVE characterizes the average limiting behavior of a large weakly interacting particle system as the particle number tends to infinity. They are often used to describe the dynamic of large-scale systems and have found extensive applications in a variety of fields, such as statistical physics \cite{Me1996}, mathematical biology \cite{BuCaMo2007}, deep learning neural networks \cite{HuReSiSz2021} and mean-field games \cite{CaDe2013,CaDe2018}, etc. 

An MVE was initially investigated in statistical physics by Kac \cite{Ka1956}, serving as a stochastic counterpart to the Vlasov kinetic equation of plasma.  The systematically probabilistic study of MVEs was started by McKean \cite{Mc1966}. Then, the fundamental properties  such as existence, uniqueness and stability of solutions were well established \cite{BaMeMe2020,MiVe2020,Ch2020,ChFr2022,Sz1991}.  Due to the lack of explicit solution to MVEs, the numerical approximation methods became an important tool for investigating MVEs and characterizing their developments \cite{LiMaSoWuYi2023,BoTa1997,KuNeReSt2022,BaHu2022}.

An important task of numerical approximation to MVE is  to explore the development of statistical properties of its solution, which is usually investigated by tracing a great amount of sample trajectories. Notice that the distribution of a random variable is able to  capture all of its possible statistical properties. However, distribution usually represents  an abstract measure and is inconvenient in applications. If the random variable possesses more smoothness, the  distribution may be absolutely continuous with respect to the Lebesgue measure, and hence the density function exists. In fact, all the possible statistical properties of a random variable can also be characterized by its density function. Moreover, if the density function possesses more smoothness, one can apply the tools of real analysis to investigate the statistic property, which widely enlarges the applications of probability theory. Specifically, we can reformulate the MVE as a system depending on density function once the solution of an  MVE has a density function. Moreover, the density function of an MVE satisfies a Fokker-Plank equation if it has more smoothness. Then based on the Fokker-Plank equation, we can independently investigate and approximate the corresponding density function. Once this is done, the MVE can be transformed to a normal SDE by virtue of density function, and then the dynamical behavior of sample trajectories can be easily simulated. 

The existence and regularity of the density functions of solutions to different kind of SDEs has been investigated by many scholars.
Nualart \cite{Nu2006} proved  the existence of a density function of the solution to an SDE  under H\"ormander condition and  global Lipschitz continuity. Furthermore, the density function exhibits smoothness if the coefficients have bounded infinite derivatives.
Antonelli and Kohatsu-Higa \cite{AnKo2002} investigated   a class of MVEs, where the  dependence on distribution is characterized by kernel functions. They proved that the   density function is infinitely differentiable  if the kernel functions have bounded derivatives of all order as well as  a restricted H\"ormander condition holds. 
Crisan and McMurray \cite{CrMc2018} established a new integration by parts formulae to MVE with the help of Lions' derivative with respect to distribution. Then, they investigated the a priori regularity estimate of the density function under the assumptions that the coefficients are sufficiently  differentiable and  Lions differentiable with respect to state variable and distribution, respectively. Additionally, they also assume all the derivatives are bounded and Lipschitz continuous as well as a uniform elliptic condition is fulfilled. Cui et al.\ \cite{CuHoSh2022} studied the existence of a density function of splitting AVF scheme for Langevin equation and its convergence  to the density function of the exact solution. Chen et al.\ \cite{ChCuHoSh2023} investigate the existence of density function of  exponential Euler scheme for stochastic heat equation and derive its convergence rate to the one of the exact solution.
Hong et al.\ \cite{HoJiSh2024} studied the convergence of the density function of a finite difference approximation to stochastic Cahn-Hilliard equation.

In this paper, we aim to study the existence and accurate order regularity of a density function of the solution to an MVE.  We first investigate the existence property only under  Lipschitz continuity and uniform elliptic conditions. In this case, the Lions derivatives of the  coefficients  do not exist, which results that we can not directly take Malliavin derivative on both side of  MVE to derive an equation that the Malliavin derivative of the solution satisfies.  Following the strategy for proving the  well-posedness of MVE  \cite{Nu2006}, we construct a sequence of approximate SDEs, whose solutions converge to the solution of MVE,  and a  sequence of measures, respectively. Then, we can conclude the existence of density functions of the solutions to SDEs \cite{Nu2006}, however, the convergence of solutions  does not guarantee the convergence of their  density functions. To address this issue,  we directly apply the Malliavin calculus to study the uniform a priori estimate and convergence of the Malliavin derivatives of the solutions to SDEs  and then deduce an estimate for the Malliavin derivative of the solution to MVE. Also, we manage to confirm the invertibility of Malliavin covariance matrix, which leads to the existence of the density function of solution to MVE. Furthermore, we also investigate the $N$-th order smoothness of density function to MVE  whenever  the coefficients are $(N+2)$-th differentiable with respect to the state variable.

The rest of this paper is organized as follows. In Section 2, we introduce some working spaces and  Malliavin calculus. In section 3, we establish the existence of the density function under Lipschitz continuity of coefficients.  In section 4, we study the precise order regularity of density function under finite smoothness assumption for coefficients. In section 5, we present several numerical experiments to illustrate the approximate density functions of solutions to MVEs via independently solving corresponding Fokker-Plank equations.

\section{Preliminaries}
In this section we first describe some spaces and Wasserstein distance, then we introduce Malliavin calculus and some of its properties \cite{Ma1997,Nu2006, Sh2004}.

Let $\mathbb{R}^m$ be an $m$-dimensional Euclidean space with norm $ \| \cdot \|$ and inner product $(\cdot,\cdot)$. Let $T>0$, by $\mathbb{H} = L^2(0,T; \mathbb{R}^m)$ denote a  Hilbert space endowed with inner product $(h_1,h_2)_{\mathbb{H}} = \int_{0}^{T} (h_1(t),h_2(t)) dt$ and norm $ \| \cdot \|_{\mathbb{H}}$. If $m=1$, we write by $H = L^2(0,T;\mathbb{R})$. Define a tensor product space $\mathbb{H} \otimes \mathbb{H}$ as a set of bounded bilinear mappings from the product space $\mathbb{H} \times \mathbb{H} $ to $\mathbb{R}$, i.e.\ for any $ h_1 \otimes h_2 \in 
\mathbb{H} \otimes \mathbb{H}$ and $(u_1,u_2) \in \mathbb{H} \times \mathbb{H}$, there holds
$$  
h_1 \otimes h_2(u_1, u_2) = (h_1,u_1)_{\mathbb{H}} (h_2,u_2)_{\mathbb{H}}.
$$ 
For simplifying notations, we write $\mathbb{H}^{\otimes k} = \underbrace{\mathbb{H} \otimes \cdots \otimes \mathbb{H}}_{k}$  and $\mathbb{H}^{\times k} = \underbrace{\mathbb{H} \times \cdots \times \mathbb{H}}_k$ for $k \geq 2$. 

Let $(\Omega, \mathcal{F}, \mathbb{P})$ be a complete probability space and $W_t =(W_t^1,\dots,W_t^m)^T$ $(t\in[0,T])$ be a standard $m$-dimensional Brownian Motion, which generates a filtration $\mathcal{W}_t$. For any $\mathcal{F}$-measurable random variable $\xi$, denote by $\mathcal{L}[\xi]$ the law of $\xi$.
Denote by $L_d^p(\Omega)$ the space of $\mathbb{R}^d$-valued $\mathcal{F}$-measurable random variables $\xi$ satisfying $\mathbb{E}[\|\xi\|^p] < \infty $. As $d =1$, we write $L^p(\Omega) = L_1^p(\Omega)$. 

For any $a, b \in \mathbb{R}$, define  $a\vee b = max\{a, b\}$. Throughout this paper, we use $D$ to represent the Fr\'echet derivative  and $\mathcal{D}$ to denote the Malliavin derivative.

Denote by $C_b^n(\mathbb{R}^d)$ the space of functions defined on $\mathbb{R}^d$ that are $n$-times continuously differentiable and bounded along with all their derivatives up to order $n$. For any $f \in C_b^n(\mathbb{R}^d)$,  define a norm by
$ \|f \|_{C_b^n(\mathbb{R}^d)} =  \sum_{i=0}^{n} \sup_{x \in \mathbb{R}^d} \|D^if(x)\|_{(\mathbb{R}^{d})^{\otimes i}}$.

Let $\mathcal{P}(\mathbb{R}^d)$ be the space of probability measures on $\mathbb{R}^d$ and for any $p \geq 1$, denote by $\mathcal{P}_p(\mathbb{R}^d)$ the subspace of $\mathcal{P}(\mathbb{R}^d)$ with finite moment of order $p$,
\begin{equation*}
	\mathcal{P}_p(\mathbb{R}^d) = \{ \mu \in \mathcal{P}(\mathbb{R}^d): \int_{\mathbb{R}^d} \| x\|^p \mu(dx) < \infty \}.
\end{equation*}
Moreover, $\mathcal{P}_p(\mathbb{R}^d)$ is a Polish space under the $p$-Wasserstein distance
\begin{equation*}
	W_p(\mu,\nu) = \inf_{\pi \in \Pi(\mu,\nu)}(\int_{\mathbb{R}^d \times \mathbb{R}^d} \| x-y\|^p \pi(dx, dy))^{\frac{1}{p}} ,\quad  \mu,\nu \in \mathcal{P}_p(\mathbb{R}^d),
\end{equation*} 
where $\Pi(\mu,\nu)$ is the set of probability measures on $\mathbb{R}^d \times \mathbb{R}^d$ with two marginal measures $\mu$ and $\nu$, i.e.\ $\pi \in \Pi(\mu,\nu)$ 
implies  $\pi(\cdot \times \mathbb{R}^d) = \mu(\cdot)$ and $\pi(\mathbb{R}^d \times \cdot) = \nu(\cdot)$.
According to \cite[Section 2.2]{BaMeMe2020}, for any two $\mathbb{R}^d$-value random variables $X$ and $Y$ with finite $p$-order moments, there holds  
\begin{equation}\label{e1.2.1}
	W_p(\mathcal{L}[X],\mathcal{L}[Y])^p \leq \mathbb{E}[\| X-Y\|^p].
\end{equation} 

Now we start to introduce Malliavin Calculus. For any $h = (h^1,\dots, h^m)^T \in \mathbb{H}$, define $W(h) = \sum_{j=1}^{m} \int_{0}^{T} h^j(t) dW^j_t$ which is a Gaussian process on $\mathbb{H}$. For any integer $\alpha \geq 1$, denote by $C_p^{\infty}(\mathbb{R}^\alpha)$ the set of all infinitely continuously differentiable functions 
$f: \mathbb{R}^\alpha \to \mathbb{R}$ such that $f$ and all of its partial derivatives have at most polynomial growth. Define a set of smooth random variables by 
\begin{equation*}
	\mathcal{S} = \{ F = f(W(h_1), \dots, W(h_\alpha)) | f \in C_p^{\infty}(\mathbb{R}^\alpha),h_i \in \mathbb{H}, i = 1,\dots,\alpha, \alpha\geq 1\}. 
\end{equation*}  
The Malliavin derivative of $F \in \mathcal{S}$ is 
defined as an $\mathbb{H}$-valued random variable given by
\begin{equation*}
	\mathcal{D}F = \sum_{i=1}^\alpha \partial_if(W(h_1),\dots,W(h_\alpha))h_i =\sum_{j=1}^m\sum_{i=1}^\alpha \partial_if(W(h_1),\dots,W(h_\alpha)h_i^je_j,
\end{equation*}
where $h_i(t) = \sum_{j=1}^m h_i^j(t)e_j$, $h^j_i \in H$ and $\{e_j\}_{j=1}^m$ is the standard basis of $\mathbb{R}^m$.
Obviously, $\mathcal{D}F=\{\mathcal{D}_tF : t\in[0,T]\}$ can be regarded as an $\mathbb{R}^m$-valued stochastic process, 
whose $j$-th component reads   
\begin{equation*}
	\mathcal{D}_t^jF = \sum_{i=1}^\alpha \partial_if(W(h_1),\dots,W(h_\alpha)h_i^j(t).
\end{equation*} 

By induction, for any integer $n \geq 2$ and $F \in \mathcal{S}$, the $n$-th order Malliavin derivative $\mathcal{D}^{n}F$ is defined as a bounded symmetric multilinear operator from $\mathbb{H}^{\times n}$ to $\mathbb{R}$ given by
\begin{equation}\label{e1.2.1-1}
	\mathcal{D}^{n}F = \sum_{i_1,\dots i_n= 1}^{\alpha} \partial_{i_1}\cdots\partial_{i_{n}}f(W(h_1),\dots,W(h_\alpha)) h_{i_1} \otimes \cdots \otimes h_{i_{n}}. 
\end{equation} 
Moreover, $\mathcal{D}^{n} F= \{\mathcal{D}_{t_1,\dots,t_{n}}F : (t_1, \dots, t_{n})\in [0,T]^{\times n}\}$ is an $(\mathbb{R}^m)^{\otimes n}$-valued stochastic process of the form
\begin{equation}\label{e1.2.1-2}
	\begin{aligned}
		&\mathcal{D}_{t_1,\dots,t_{n}}F = \\
		&\sum_{j_1, \dots, j_n=1}^{m}\sum_{i_1,\dots, i_n=1}^{\alpha} \partial_{i_1}\cdots\partial_{i_{n}}f(W(h_1),\dots,W(h_\alpha)) h_{i_1}^{j_1}(t_1) \cdots h_{i_{n}}^{j_{n}}(t_{n}) e_{j_1}\otimes \cdots \otimes e_{j_{n}}.
	\end{aligned}
\end{equation}
For any given $1 \leq j_1,\dots,j_{n} \leq m$, the  component associated with index $(j_1,\dots,j_{n})$ of the $n$-th order Malliavin derivative $\mathcal{D}^{n}F$ is denoted by
\begin{equation}\label{e1.2.1-3}
	\mathcal{D}_{t_1,\dots,t_{n}}^{j_1,\dots,j_{n}}F = \sum_{i_1,\dots,i_n=1}^{\alpha} \partial_{i_1}\cdots\partial_{i_{n}}f(W(h_1),\dots,W(h_\alpha)) h_{i_1}^{j_1}(t_1) \cdots h_{i_{n}}^{j_{n}}(t_{n}). 
\end{equation}

For any $n, p \geq 1$, define a Watanabe-Sobolev space $\mathbb{D}^{n,p}$ as the closure of the class of smooth random variables $\mathcal{S}$ with respect to a norm
\begin{equation*}
	\|  F \|_{n,p} = \big(\mathbb{E}\big[|F|^p + \sum_{k=1}^{n}\|  \mathcal{D}^kF \|^p_{\mathbb{H}^{\otimes k}}\big]\big)^{\frac{1}{p}} < \infty.
\end{equation*}
Similarly, let $(V,\|\cdot\|_V)$ be a real separable Hilbert space and we define a space $\mathbb{D}^{n,p}(V)$ as the completion of $V$-valued smooth random variables with respect to a norm
\begin{equation*}
	\|  F \|_{n,p,V} = \big(\mathbb{E}\big[\| F\|_V^p + \sum_{k=1}^{n}\|  \mathcal{D}^kF \|^p_{\mathbb{H}^{\otimes k}\otimes V}\big]\big)^{\frac{1}{p}} < \infty.
\end{equation*}

The next lemma provides a way to estimate the Malliavin derivative.
\begin{lemma}\label{L1.2.1-1}
	Let $F$ be a $V$-valued smooth random variable. Assume that for any given $p \geq 2$ and integer $n \geq 1$, there exists a constant $L>0$ such that $\mathbb{E}[\|\mathcal{D}_{t_1,\dots,t_n}^{j_1,\dots,j_n}F\|^p_{V}] \leq L $ for all $t_1,\dots,t_n \in [0,T]$ and $1 \leq j_1,\dots,j_n \leq m$. Then, there exists a constant $C=C(T,p,n,L)>0$ such that $\mathbb{E}[\|\mathcal{D}^nF\|^p_{\mathbb{H}^{\otimes n}\otimes V}] \leq C$.  
\end{lemma}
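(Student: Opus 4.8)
The plan is to reduce the tensor-space bound to the componentwise hypothesis by realizing $\mathcal{D}^nF$ as an element of an $L^2$-space over the product of $[0,T]^n$ with the finite index set $\{1,\dots,m\}^n$, and then to apply Minkowski's integral inequality to interchange the $L^p(\Omega)$-norm with that $L^2$-integration. Since the hypothesis controls $\mathbb{E}[\|\mathcal{D}^{j_1,\dots,j_n}_{t_1,\dots,t_n}F\|_V^p]$ uniformly in the time points and indices, swapping the integration order lets me integrate a constant over a finite-measure set, which produces the desired $C$.

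First I would make the Hilbert norm explicit. Identifying $\mathbb{H}^{\otimes n}\otimes V$ with $L^2\big([0,T]^n;(\mathbb{R}^m)^{\otimes n}\otimes V\big)$ and using orthonormality of $\{e_j\}_{j=1}^m$ together with the component representation \eqref{e1.2.1-3}, one has
\begin{align*}
\|\mathcal{D}^nF\|^2_{\mathbb{H}^{\otimes n}\otimes V}
&=\sum_{j_1,\dots,j_n=1}^m\int_{[0,T]^n}\big\|\mathcal{D}^{j_1,\dots,j_n}_{t_1,\dots,t_n}F\big\|_V^2\,dt_1\cdots dt_n.
\end{align*}
It is convenient to package the sum and the integral as a single integral over the finite measure space $\Theta=[0,T]^n\times\{1,\dots,m\}^n$, equipped with the product $\mu$ of Lebesgue measure and counting measure, so that $\mu(\Theta)=(mT)^n$. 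Writing $X(\theta)=\|\mathcal{D}^{j_1,\dots,j_n}_{t_1,\dots,t_n}F\|_V$ for $\theta=(t_1,\dots,t_n,j_1,\dots,j_n)$, this reads $\|\mathcal{D}^nF\|_{\mathbb{H}^{\otimes n}\otimes V}=\|X\|_{L^2(\Theta,\mu)}$; the joint measurability required below is immediate from the explicit smooth form \eqref{e1.2.1-2}.

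The key step is the exchange of norms. Since $p\geq 2$ we may take $r:=p/2\geq 1$, and Minkowski's integral inequality on $(\Theta,\mu)$ with exponent $r$ gives
\begin{align*}
\Big(\mathbb{E}\big[\|\mathcal{D}^nF\|^p_{\mathbb{H}^{\otimes n}\otimes V}\big]\Big)^{2/p}
&=\Big\|\int_\Theta X(\theta)^2\,d\mu(\theta)\Big\|_{L^{p/2}(\Omega)}\\
&\leq\int_\Theta\big\|X(\theta)^2\big\|_{L^{p/2}(\Omega)}\,d\mu(\theta)
=\int_\Theta\big(\mathbb{E}[X(\theta)^p]\big)^{2/p}\,d\mu(\theta).
\end{align*}
Inserting the hypothesis $\mathbb{E}[X(\theta)^p]=\mathbb{E}[\|\mathcal{D}^{j_1,\dots,j_n}_{t_1,\dots,t_n}F\|_V^p]\leq L$, which is uniform in $\theta$, and integrating the constant over $\Theta$ yields $\big(\mathbb{E}[\|\mathcal{D}^nF\|^p]\big)^{2/p}\leq L^{2/p}(mT)^n$, and hence $\mathbb{E}[\|\mathcal{D}^nF\|^p_{\mathbb{H}^{\otimes n}\otimes V}]\leq L\,(mT)^{np/2}=:C$, a constant depending only on $T,p,n,L$ and the fixed dimension $m$.

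I expect the only genuine subtlety to be getting the direction of Minkowski's integral inequality right: one must take the \emph{higher} exponent ($L^p$ in $\omega$) on the inside and the \emph{lower} exponent ($L^2$ in $\theta$) on the outside, which is exactly what is licensed by $p/2\geq 1$. Everything else—the identification of the tensor norm with an $L^2(\Theta,\mu)$-norm, the joint measurability needed for Fubini, and the finiteness $\mu(\Theta)=(mT)^n$—is routine because $F$ is a smooth random variable.
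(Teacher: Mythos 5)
Your proof is correct, and it takes a genuinely more direct route than the paper's. The paper never computes the $\mathbb{H}^{\otimes n}\otimes V$-norm head-on: it tests the multilinear form $\mathcal{D}^nF$ against arbitrary $g_1,\dots,g_n\in\mathbb{H}$, expands into components, and runs H\"older twice (once over the finite sum in $j_1,\dots,j_n$, once over $[0,T]^{\times n}$ to exchange the expectation with the time integral), ending with a bound on $\mathbb{E}[\|\mathcal{D}^nF(g_1,\dots,g_n)\|_V^p]$ of the form $C\|g_1\|_{\mathbb{H}}^p\cdots\|g_n\|_{\mathbb{H}}^p$. You instead identify $\|\mathcal{D}^nF\|^2_{\mathbb{H}^{\otimes n}\otimes V}$ with the $L^2(\Theta,\mu)$-norm over $\Theta=[0,T]^n\times\{1,\dots,m\}^n$ and use one application of Minkowski's integral inequality (valid since $p/2\geq 1$) to pull $L^{p/2}(\Omega)$ inside. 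What your approach buys: (i) it bounds the Hilbert--Schmidt norm directly, whereas the paper's final display, taken literally, only controls the action on fixed test tuples (an operator-type bound), leaving the passage to the $\mathbb{H}^{\otimes n}\otimes V$-norm implicit; (ii) it yields the cleaner constant $L(mT)^{np/2}$, without the extra factors $m^{n(p-1)}\cdot m^n$ the paper accumulates from its componentwise H\"older steps (and without the paper's apparent typo $L^p$ in place of $L$). What the paper's route buys is that it uses only elementary H\"older/Jensen estimates on a finite measure space rather than invoking Minkowski's integral inequality, but your exchange of norms plays exactly the role of the paper's H\"older step, so the two arguments are of comparable depth.
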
	

\begin{proof}
	Let $\vec{t} = (t_1,\dots, t_n)$. For any $g_i = (g_i^1,\dots,g_i^m)^T\in \mathbb{H}$ with $g_i^j \in H$, $1\leq i \leq n$ and $1\leq j \leq m$, noticing \eqref{e1.2.1-1}--\eqref{e1.2.1-3} and H\"older inequality, we have 
	\begin{equation*}
		\begin{aligned}
			&\ \|\mathcal{D}^nF(g_1,\dots,g_n)\|_V \\
			=&\ \|\int_{[0,T]^{\times n}}\mathcal{D}_{t_1,\dots,t_n}F(g_1(t_1), \dots, g_n(t_n)) d\vec{t}\ \|_V\\
			\leq&\ \int_{[0,T]^{\times n}}\| \sum_{j_1,\dots,j_n=1}^{m} \mathcal{D}_{t_1,\dots,t_n}^{j_1,\dots,j_n}F e_{j_1}\otimes \cdots \otimes e_{j_n} (g_1(t_1),\dots,g_n(t_n))\|_V d\vec{t}\\
			= &\ \int_{[0,T]^{\times n}} \|\sum_{j_1,\dots,j_n=1}^{m}\mathcal{D}_{t_1,\dots,t_n}^{j_1,\dots,j_n}Fg_1^{j_1}(t_1)\cdots g_n^{j_n}(t_n)\|_V d\vec{t}\\
			\leq &\  \int_{[0,T]^{\times n}}\sum_{j_1,\dots,j_n=1}^{m} \|\mathcal{D}_{t_1,\dots,t_n}^{j_1,\dots,j_n}Fg_1^{j_1}(t_1) \cdots g_n^{j_n}(t_n)\|_V d\vec{t}\\
			= &\ \sum_{j_1,\dots,j_n=1}^{m} \int_{[0,T]^{\times n}} \|\mathcal{D}_{t_1,\dots,t_n}^{j_1,\dots,j_n}F\|_V |g_1^{j_1}(t_1)| \cdots |g_n^{j_n}(t_n)| d\vec{t}\\
			\leq &\ \sum_{j_1,\dots,j_n=1}^{m} (\int_{[0,T]^{\times n}} \|\mathcal{D}_{t_1,\dots,t_n}^{j_1,\dots,j_n}F\|^2_V d\vec{t}\ )^{\frac{1}{2}} (\int_{0}^{T}|g_1^{j_1}(t_1)|^2 dt_1 \cdots \int_{0}^{T}|g_n^{j_n}(t_n)|^2 dt_n)^{\frac{1}{2}}\\
			= &\ \sum_{j_1,\dots,j_n=1}^{m} (\int_{[0,T]^{\times n}} \|\mathcal{D}_{t_1,\dots,t_n}^{j_1,\dots,j_n}F\|^2_V d\vec{t}\ )^{\frac{1}{2}} \|g_1^{j_1}\|_H\cdots\|g_n^{j_n}\|_H.
		\end{aligned}
	\end{equation*}
	By H\"older inequality, we obtain that 
	\begin{equation*}
		\begin{aligned}
			&\ \mathbb{E}[\|\mathcal{D}^nF(g_1,\dots,g_n)\|^p_V] \\
			\leq &\   m^{n(p-1)}\mathbb{E}[ \sum_{j_1,\dots,j_n=1}^{m} (\int_{[0,T]^{\times n}} \|\mathcal{D}_{t_1,\dots,t_n}^{j_1,\dots,j_n}F\|^2_V d\vec{t}\ )^{\frac{p}{2}} \|g_1^{j_1}\|^p_H\cdots\|g_n^{j_n}\|^p_H]\\
			\leq &\  m^{n(p-1)} \sum_{j_1,\dots,j_n=1}^{m}  \mathbb{E}[T^{\frac{n(p-2)}{2}}\int_{[0,T]^{\times n}}  \|\mathcal{D}_{t_1,\dots,t_n}^{j_1,\dots,j_n}F\|^p_V d\vec{t}\ ] \cdot \|g_1^{j_1}\|^p_H\cdots\|g_n^{j_n}\|^p_H\\
			\leq &\ m^{n(p-1)}T^{\frac{n(p-2)}{2}}L^pT^n \sum_{j_1,\dots,j_n=1}^{m}\|g_1^{j_1}\|^p_H\cdots\|g_n^{j_n}\|^p_H \\
			\leq &\ m^{n(p-1)}T^{\frac{np}{2}}L^p m^n\|g_1\|^p_{\mathbb{H}}\cdots\|g_n\|^p_{\mathbb{H}},
		\end{aligned}
	\end{equation*}
	which completes the proof. 	 
\end{proof}

For any $F=(F^1,\dots,F^d)^T \in \mathbb{D}^{n,p}(\mathbb{R}^d)$, its Malliavin covariance matrix $Q = (Q_{i,l})$ is defined as $Q_{i,l} = (\mathcal{D}F^i,\mathcal{D}F^l)_{\mathbb{H}}$ for $1 \leq i, l \leq d$.
Define some spaces 
\begin{equation*}
	L^{\infty-}(\Omega) = \cap_{p \geq 1} L^p(\Omega),\qquad
	\mathbb{D}^{n,\infty} = \cap_{p \geq 1} \mathbb{D}^{n,p},\qquad 
	\mathbb{D}^{n,\infty}(V) = \cap_{p \geq 1} \mathbb{D}^{n,p}(V).	
\end{equation*}

The following two lemmas provide  criteria for determining the existence and smoothness of density function to a random variable. 

\begin{lemma}({\cite[Theorem 2.1.2]{Nu2006}})\label{L1.2.1}
	Let $p > 1$, $G \in  \mathbb{D}^{1,p}_{loc}(\mathbb{R}^d)$ and its Malliavin covariance matrix $Q$ is invertible a.s.
	Then the law of $G$ is absolutely continuous with respect to the Lebesgue measure on $\mathbb{R}^d$.
\end{lemma}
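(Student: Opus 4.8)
The plan is to prove directly that the image law $\mu:=\mathcal{L}[G]=\mathbb{P}\circ G^{-1}$ on $\mathbb{R}^d$ is absolutely continuous with respect to the Lebesgue measure $\lambda$, by controlling its distributional derivatives. Concretely, I would first establish an integration-by-parts estimate
\begin{equation*}
\Big|\int_{\mathbb{R}^d}\partial_i\phi(x)\,\mu(dx)\Big|=\big|\mathbb{E}[\partial_i\phi(G)]\big|\le c_i\,\|\phi\|_{\infty},\qquad \phi\in C_c^\infty(\mathbb{R}^d),\ 1\le i\le d,
\end{equation*}
with finite constants $c_i$. Granting this, a purely analytic lemma completes the argument: the estimate shows that each distributional partial derivative $\partial_i\mu$ is a finite signed measure, so the mollifications $\mu_\eta=\mu*\rho_\eta$ satisfy $\|\mu_\eta\|_{L^1}=1$ and $\|\nabla\mu_\eta\|_{L^1}\le\sum_{i}c_i$ uniformly in $\eta$; hence $\{\mu_\eta\}$ is bounded in $W^{1,1}(\mathbb{R}^d)$, and the Sobolev embedding $W^{1,1}\hookrightarrow L^{d/(d-1)}$ for $d\ge2$ (respectively $W^{1,1}\hookrightarrow L^{\infty}$ for $d=1$) together with weak compactness produces a limit $f\in L^1(\mathbb{R}^d)$ with $\mu=f\,\lambda$, which is the desired absolute continuity.

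The probabilistic core is the integration-by-parts estimate, for which I would use the chain rule $\mathcal{D}(\phi(G))=\sum_{j=1}^d\partial_j\phi(G)\,\mathcal{D}G^j$ and pair it in $\mathbb{H}$ against each $\mathcal{D}G^k$ to obtain the linear system
\begin{equation*}
\big(\mathcal{D}(\phi(G)),\mathcal{D}G^k\big)_{\mathbb{H}}=\sum_{j=1}^d\partial_j\phi(G)\,Q_{j,k},\qquad 1\le k\le d.
\end{equation*}
Since $Q$ is invertible a.s., I can solve for the gradient and write $\partial_i\phi(G)=\big(\mathcal{D}(\phi(G)),u_i\big)_{\mathbb{H}}$ with $u_i:=\sum_{k=1}^d(Q^{-1})_{i,k}\,\mathcal{D}G^k\in\mathbb{H}$. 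Taking expectations and invoking the duality between $\mathcal{D}$ and its adjoint divergence operator $\delta$ (the Skorokhod integral), namely $\mathbb{E}[(\mathcal{D}F,u)_{\mathbb{H}}]=\mathbb{E}[F\,\delta(u)]$, yields $\mathbb{E}[\partial_i\phi(G)]=\mathbb{E}[\phi(G)\,\delta(u_i)]$, so that the estimate holds with $c_i=\mathbb{E}|\delta(u_i)|$.

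I expect the main obstacle to be that the hypotheses furnish only \emph{almost sure} invertibility of $Q$ and membership $G\in\mathbb{D}^{1,p}_{loc}(\mathbb{R}^d)$, rather than the stronger $(\det Q)^{-1}\in L^{\infty-}(\Omega)$ with global smoothness. Then $Q^{-1}$ carries no integrability, $u_i$ need not belong to the domain of $\delta$, and $\delta(u_i)$ need not be integrable, so the integration by parts above is not literally justified. I would resolve this by localization: replace $Q$ with the strictly positive-definite regularization $Q_\varepsilon=Q+\varepsilon I$, for which $\|Q_\varepsilon^{-1}\|\le1/\varepsilon$ and all manipulations are legitimate, combine it with the localizing sequence defining $\mathbb{D}^{1,p}_{loc}$ and a cutoff on the events $\{\det Q>1/n\}$, and then pass to the limits $\varepsilon\downarrow0$ and $n\to\infty$, using $\det Q>0$ a.s. to recover the full conclusion (this is the Bouleau--Hirsch argument). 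An equivalent route is the energy-image-density principle: one shows that the pushforward of $(\det Q)\,d\mathbb{P}$ under $G$ is absolutely continuous, and then, because $\det Q>0$ a.s., upgrades this to absolute continuity of $\mu=\mathcal{L}[G]$ itself.
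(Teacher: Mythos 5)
First, a point of comparison: the paper offers no proof of this lemma at all --- it is quoted verbatim as Theorem 2.1.2 of Nualart's book (the Bouleau--Hirsch criterion), so the only meaningful review is of your argument on its own merits. Your overall architecture --- the analytic lemma (a finite measure whose distributional partial derivatives are finite measures is absolutely continuous, via mollification, Gagliardo--Nirenberg and weak compactness) combined with the integration-by-parts bound $|\mathbb{E}[\partial_i\phi(G)]|\le c_i\|\phi\|_\infty$ with weights $u_i=\sum_k (Q^{-1})_{i,k}\mathcal{D}G^k$ --- is the standard proof of the absolute-continuity criterion under \emph{stronger} hypotheses (enough integrability of $Q^{-1}$ and second-order Malliavin differentiability of $G$), and you correctly diagnose that under the actual hypotheses ($G\in\mathbb{D}^{1,p}_{loc}$, $Q$ invertible merely a.s.) this scheme is not justified.

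The genuine gap is that your proposed repair does not close it. Replacing $Q$ by $Q_\varepsilon=Q+\varepsilon I$ makes $Q_\varepsilon^{-1}$ bounded, but it does not put $u_i^\varepsilon=\sum_k(Q_\varepsilon^{-1})_{i,k}\mathcal{D}G^k$ into $\mathrm{Dom}\,\delta$: the usual way to verify membership (via $\mathbb{D}^{1,2}(\mathbb{H})\subset\mathrm{Dom}\,\delta$) requires differentiating $Q_\varepsilon^{-1}$, hence a second Malliavin derivative of $G$, which $\mathbb{D}^{1,p}_{loc}$ does not provide; so even the regularized integration by parts is unjustified. Moreover, even granting it, the resulting constants $\mathbb{E}|\delta(u_i^\varepsilon)|$ have no uniform bound as $\varepsilon\downarrow 0$, while the analytic lemma requires one \emph{fixed} finite constant $c_i$; the limit cannot be taken. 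The cutoff on $\{\det Q>1/n\}$ does not rescue this either: an indicator cannot be inserted into the duality $\mathbb{E}[(\mathcal{D}F,u)_{\mathbb{H}}]=\mathbb{E}[F\,\delta(u)]$, and localizing \emph{inside} an integration by parts requires a Malliavin-differentiable cutoff $\psi(\det Q)$, hence $\det Q\in\mathbb{D}^{1,\cdot}$, hence again $G\in\mathbb{D}^{2,\cdot}$. This is precisely why the Bouleau--Hirsch theorem is \emph{not} proved by a regularized integration-by-parts argument: their proof (and the one in Nualart to which the paper appeals) establishes the energy-image-density property --- the pushforward of $(\det Q)\,d\mathbb{P}$ under $G$ is absolutely continuous --- by approximation and co-area-type arguments, a genuinely different technique, and then upgrades to the law of $G$ using $\det Q>0$ a.s. Your final sentence names this route correctly, but naming it is where the proof actually begins: that property is the entire content of the theorem, and your proposal leaves it unproven.
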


\begin{lemma}(\cite[Theorem 5.9]{Sh2004})\label{L1.2.2}
	Let $G$ be an $\mathbb{R}^d$-valued random variable with density function $f$ and $\triangle = det Q $. Take $p > d$ and for any integer $N \geq 0$ , let $ M = \frac{(N+1)^2}{2} + \frac{3(N+1)}{2}$. 
	If $G$ belongs to the space $\mathbb{D}^{N+2, 4dMp}(\mathbb{R}^d)$ and $\triangle^{-1}$ belongs to the space $L^{2Mp}(\Omega)$. 
	Then it follows that $f \in C_b^{N}(\mathbb{R}^d)$ and
	\begin{equation*}
		\|  f \|_{C_{b}^{N}(\mathbb{R}^d)} \leq C(1 + \|  \mathcal{D}G  \|_{N+1,4dMp,\mathbb{R}^d}^{2dM} \|  \triangle^{-1} \|_{L^{2Mp}(\Omega)}^{M})
		\|  f \|_{C_{b}(\mathbb{R}^d)}^{1-\frac{1}{p}},
	\end{equation*}
	where $C = C(p,d,N) >0$ is a constant.  
\end{lemma}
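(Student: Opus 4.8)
The statement is Shigekawa's smoothness criterion, and the route I would take is the classical one via the Malliavin integration-by-parts formula combined with the Dirac pullback $\delta_x(G)$. First I would set up the basic identity. Since $G\in\mathbb{D}^{N+2,4dMp}(\mathbb{R}^d)$ is nondegenerate (its covariance matrix $Q$ is a.s.\ invertible with $\triangle^{-1}=(\det Q)^{-1}\in L^{2Mp}(\Omega)$), for every multi-index $\alpha$ with $|\alpha|\le N+1$ and every bounded smooth $\phi$ one has
\begin{equation*}
	\mathbb{E}[\partial_\alpha\phi(G)]=\mathbb{E}[\phi(G)\,H_\alpha(G,1)],
\end{equation*}
where the weight $H_\alpha$ is built recursively from the inverse matrix $Q^{-1}=\triangle^{-1}\mathrm{cof}(Q)$, the Malliavin derivatives $\mathcal{D}G$, and the divergence operator $\delta$ (the adjoint of $\mathcal{D}$). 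This follows by writing $\partial_i\phi(G)=\sum_{l}(Q^{-1})_{il}(\mathcal{D}(\phi(G)),\mathcal{D}G^l)_{\mathbb{H}}$ and applying the duality $\mathbb{E}[(\mathcal{D}F,u)_{\mathbb{H}}]=\mathbb{E}[F\,\delta(u)]$, then iterating; each iteration consumes one order of Malliavin differentiability of $G$.

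Next I would represent $f$ and its derivatives as expectations. Choosing $\phi$ so that $\partial_{(1,\dots,d)}\phi=\mathbf{1}_{(x,\infty)}$ gives $f(x)=\mathbb{E}[\mathbf{1}_{\{G>x\}}H_{(1,\dots,d)}(G,1)]$ (where $\{G>x\}$ is understood componentwise), and pushing $|\alpha|$ further derivatives onto the weight rather than onto the indicator yields, for $|\alpha|\le N$, a representation of the form $\partial_\alpha f(x)=\mathbb{E}[\delta_x(G)\,K_\alpha]$, with $\delta_x(G)$ the pullback of the Dirac mass at $x$ and $K_\alpha$ a weight assembled from $Q^{-1}$, from derivatives of $G$ up to order $N+1$, and from their divergences. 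Uniform boundedness and continuity in $x$ of such expectations give $f\in C_b^N(\mathbb{R}^d)$, so the remaining task is purely quantitative.

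For the norm bound I would exploit the delta-pullback representation through a conditional Hölder/Jensen step, which is exactly what produces the interpolation factor $\|f\|_{C_b(\mathbb{R}^d)}^{1-1/p}$. Writing $\mathbb{E}[\delta_x(G)K_\alpha]=f(x)\,\mathbb{E}[K_\alpha\mid G=x]$ and using Jensen with exponent $p$,
\begin{equation*}
	|\partial_\alpha f(x)|\le f(x)\,\mathbb{E}[|K_\alpha|^p\mid G=x]^{1/p}
	= f(x)^{1-\frac1p}\big(f(x)\,\mathbb{E}[|K_\alpha|^p\mid G=x]\big)^{1/p}
	\le \|f\|_{C_b(\mathbb{R}^d)}^{1-\frac1p}\,\mathbb{E}[\delta_x(G)\,|K_\alpha|^p]^{1/p}.
\end{equation*}
The residual factor $\mathbb{E}[\delta_x(G)|K_\alpha|^p]$ is the $x$-density of $G$ weighted by $|K_\alpha|^p$; a further integration by parts bounds it uniformly in $x$ by an $L^1$-norm of a weight, and estimating that weight by Hölder together with Meyer's inequalities produces precisely the factors $\|\mathcal{D}G\|_{N+1,4dMp,\mathbb{R}^d}^{2dM}$ and $\|\triangle^{-1}\|_{L^{2Mp}(\Omega)}^{M}$. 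The threshold $p>d$ is what guarantees, through the $d$-fold integration by parts creating $\delta_x(G)$ in $\mathbb{R}^d$, that these weighted densities are bounded continuous functions of $x$.

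The main obstacle I expect is the exponent bookkeeping, i.e.\ tracking how many factors of $\triangle^{-1}$ and of $\mathcal{D}G$ appear after the nested integrations by parts. Each order of differentiation of the density forces a differentiation of $\triangle^{-1}$ through the cofactor formula, which both raises the power of $\triangle^{-1}$ and brings down extra Malliavin-derivative factors, so the total multiplicity accumulates quadratically in $N$; verifying that this accumulation closes at exactly $M=\frac{(N+1)^2}{2}+\frac{3(N+1)}{2}$ and that the integrability requirements terminate at $\mathbb{D}^{N+2,4dMp}(\mathbb{R}^d)$ and $L^{2Mp}(\Omega)$ is the delicate part. A secondary difficulty is the rigorous justification of each integration by parts, namely checking that every intermediate weight lies in a space on which the $\mathcal{D}$--$\delta$ duality is valid; this is why $G\in\mathbb{D}^{N+2,\cdot}$ rather than merely $\mathbb{D}^{N+1,\cdot}$ is needed, the extra order being consumed by the final application of the divergence operator.
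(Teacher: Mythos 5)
This lemma is not proved in the paper at all: it is imported verbatim as an external tool, namely \cite[Theorem 5.9]{Sh2004}, so there is no in-paper argument to compare your attempt against. Judged on its own merits, your route is the standard one behind results of this type: iterated Malliavin integration by parts with weights $H_\alpha$ built from $Q^{-1}$, $\mathcal{D}G$ and the divergence operator, the representation of $f$ and its derivatives via indicator/Dirac pullbacks (cf.\ \cite[Propositions 2.1.4 and 2.1.5]{Nu2006}), and a conditional Jensen step to peel off the interpolation factor $\|f\|_{C_b(\mathbb{R}^d)}^{1-\frac{1}{p}}$. As an identification of the correct mechanism, the sketch is sound, and your diagnosis of why $\mathbb{D}^{N+2,\cdot}$ rather than $\mathbb{D}^{N+1,\cdot}$ is needed (one order consumed by the final divergence) is right.

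Two things, however, keep the proposal from being a proof of \emph{this} statement. First, the entire quantitative content of the lemma lies in the specific exponents: $M=\frac{(N+1)^2}{2}+\frac{3(N+1)}{2}$, the Sobolev index $4dMp$, the power $2Mp$ on $\triangle^{-1}$, and the powers $2dM$ and $M$ in the conclusion. You explicitly defer exactly this bookkeeping as ``the delicate part,'' so what the sketch establishes is only the qualitative assertion that $f\in C_b^N(\mathbb{R}^d)$ with \emph{some} polynomial bound in $\|\mathcal{D}G\|$ and $\|\triangle^{-1}\|$, not the stated estimate; for a result whose value is precisely its explicit constants (the paper uses them in Theorem \ref{L1.4.3} to calibrate how much smoothness of the coefficients is needed), this is the proof, not a footnote to it. Second, the conditional Jensen step feeds weights of the form $|K_\alpha|^p$ into subsequent integrations by parts, and these are not smooth functionals (an absolute value raised to a generally non-integer power), so the duality $\mathbb{E}[(\mathcal{D}F,u)_{\mathbb{H}}]=\mathbb{E}[F\,\delta(u)]$ cannot be applied to them as written; one must either regularize (e.g.\ replace $|K_\alpha|^p$ by $(|K_\alpha|^2+\epsilon)^{p/2}$ and pass to the limit) or exploit that $p>d$ makes $x\mapsto|x|^p$ sufficiently many times differentiable for the $d$-fold integration by parts --- this justification, which is plausibly the true role of the hypothesis $p>d$, is missing. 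Since the paper simply cites Shigekawa, the clean resolution is to do the same; a self-contained proof would require filling both gaps.
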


The next lemma  extends the chain rule for Malliavin derivative to the case of Lipschitz continuity.

\begin{lemma}(\cite[Proposition 1.2.4]{Nu2006})\label{L1.2.2-1}
	Let $\phi : \mathbb{R}^d \rightarrow \mathbb{R}$ be a function such that $\| \phi(x)- \phi(y)\| \leq L\| x-y\|$ for $x,y \in \mathbb{R}^d$.
	Suppose that $F \in \mathbb{D}^{1,2}(\mathbb{R}^d)$, then $\phi(F) \in \mathbb{D}^{1,2}$ and there exists a random vector $G = (G_1, \dots, G_d)$ bounded by $L$ such that $\mathcal{D}(\phi(F)) = \sum_{i=1}^{d} G_i\mathcal{D}F^i$.
\end{lemma}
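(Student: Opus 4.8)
The plan is to reduce the Lipschitz case to the classical chain rule for smooth functions by mollification, and then to pass to the limit using the weak compactness of the (uniformly bounded) gradient sequence together with the closedness of the Malliavin derivative operator $\mathcal{D}$.

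First I would establish the smooth version: if $\psi \in C^1(\mathbb{R}^d)$ has bounded first-order partial derivatives, then $\psi(F) \in \mathbb{D}^{1,2}$ and $\mathcal{D}(\psi(F)) = \sum_{i=1}^d \partial_i\psi(F)\,\mathcal{D}F^i$. For $F \in \mathcal{S}^d$ this is immediate from the explicit formula defining $\mathcal{D}$ on $\mathcal{S}$ combined with the ordinary calculus chain rule; the general $F \in \mathbb{D}^{1,2}(\mathbb{R}^d)$ follows by approximating $F$ in the $\|\cdot\|_{1,2}$ norm by elements of $\mathcal{S}^d$ and closing up, using that $\partial_i\psi$ is bounded and continuous. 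Next I would mollify $\phi$: fix a nonnegative $\alpha \in C_c^\infty(\mathbb{R}^d)$ with $\int\alpha = 1$, set $\alpha_n(x) = n^d\alpha(nx)$ and $\phi_n = \phi * \alpha_n$. Then each $\phi_n$ is smooth, $\|\nabla\phi_n\|_\infty \le L$ (convolution against a probability density preserves the Lipschitz constant), and $\phi_n \to \phi$ uniformly with $\|\phi_n - \phi\|_\infty \le \tfrac{L}{n}\int\|z\|\alpha(z)\,dz \to 0$. Applying the smooth chain rule to each $\phi_n$ gives
\[
\mathcal{D}(\phi_n(F)) = \sum_{i=1}^d \partial_i\phi_n(F)\,\mathcal{D}F^i .
\]

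The crux is the limit passage. Since $\|\nabla\phi_n(F)\| \le L$ a.s., the sequences $\{\partial_i\phi_n(F)\}_n$ are bounded in $L^\infty(\Omega)$, so by weak-$*$ sequential compactness I extract a single subsequence along which $\partial_i\phi_n(F) \rightharpoonup G_i$ weak-$*$ in $L^\infty(\Omega)$ for every $i$ simultaneously; because the closed ball $\{\|\cdot\|\le L\}\subset\mathbb{R}^d$ is convex and closed, the weak limit $G=(G_1,\dots,G_d)$ again satisfies $\|G\|\le L$ a.s. On one hand, $\phi_n(F)\to\phi(F)$ in $L^2(\Omega)$ by the uniform convergence above. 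On the other hand, for any test element $\eta\in L^2(\Omega;\mathbb{H})$ the scalar $(\mathcal{D}F^i,\eta)_{\mathbb{H}}$ lies in $L^1(\Omega)$ by Cauchy--Schwarz, so
\[
\mathbb{E}\big[(\partial_i\phi_n(F)\,\mathcal{D}F^i,\eta)_{\mathbb{H}}\big]
= \mathbb{E}\big[\partial_i\phi_n(F)\,(\mathcal{D}F^i,\eta)_{\mathbb{H}}\big]
\longrightarrow \mathbb{E}\big[G_i\,(\mathcal{D}F^i,\eta)_{\mathbb{H}}\big],
\]
which shows $\mathcal{D}(\phi_n(F)) \rightharpoonup \sum_{i=1}^d G_i\,\mathcal{D}F^i$ weakly in $L^2(\Omega;\mathbb{H})$. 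Since $\mathcal{D}:\mathbb{D}^{1,2}\to L^2(\Omega;\mathbb{H})$ is a closed operator, its graph is a norm-closed linear subspace of $L^2(\Omega)\times L^2(\Omega;\mathbb{H})$ and is therefore weakly closed; hence the pair $\big(\phi(F),\sum_i G_i\mathcal{D}F^i\big)$ belongs to the graph, giving $\phi(F)\in\mathbb{D}^{1,2}$ with $\mathcal{D}(\phi(F)) = \sum_{i=1}^d G_i\,\mathcal{D}F^i$.

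I expect the main obstacle to be exactly this final weak-closure step: one cannot rely on strong convergence of the derivatives, so the identification of the limit must go through the fact that a closed linear operator has a weakly closed graph (equivalently, Mazur's lemma converts the weak limit into strong limits of convex combinations). A secondary technical point is ensuring the weak-$*$ limits $G_i$ are genuine random variables for which the a.s. bound $\|G\|\le L$ holds; this is handled by the convexity and closedness of the constraint set $\{\|\cdot\|\le L\}$, which is preserved under weak limits.
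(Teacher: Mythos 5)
Your proposal is correct and follows essentially the same route as the proof of the cited result (\cite[Proposition 1.2.4]{Nu2006}), which the paper itself quotes without reproving: mollify $\phi$, apply the smooth chain rule, use the uniform bound $\|\nabla\phi_n\|_\infty\le L$ to extract weak-$*$ limits $G_i$, and identify $\mathcal{D}(\phi(F))=\sum_i G_i\mathcal{D}F^i$ by a weak-closedness argument. The only cosmetic difference is that your final step proves the weak closedness of the graph of $\mathcal{D}$ directly, whereas the standard proof (and the paper's toolkit) packages exactly this step as Lemma~\ref{L1.3.3}.
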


The next lemma  provides a technique to study  the invertibility of the Malliavin covariance matrix. 

\begin{lemma}(\cite[Lemma 2.3.1]{Nu2006})\label{L1.2.3}
	Let $Q$ be a symmetric nonnegative definite $d \times d$ random matrix. Assume that the entries $Q_{i,l}$ have moments of all orders and that
	for any $p \geq 2$ there exists $\epsilon_0(p)$ such that for all $\epsilon \leq \epsilon_0(p)$ there holds
	\begin{equation*}
		\sup_{\| v\|=1} \mathbb{P}\{v^T Q v \leq \epsilon \} \leq \epsilon^p.
	\end{equation*} 
	Then $(det Q)^{-1} \in L^p(\Omega)$.
\end{lemma}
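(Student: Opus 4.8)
The plan is to reduce the statement to a tail estimate for the smallest eigenvalue $\lambda(Q) = \inf_{\|v\|=1} v^T Q v$ of $Q$. Since $Q$ is symmetric and nonnegative definite, all of its eigenvalues satisfy $\lambda_i \geq \lambda(Q) \geq 0$, whence $\det Q = \prod_{i} \lambda_i \geq \lambda(Q)^d$ and therefore $(\det Q)^{-1} \leq \lambda(Q)^{-d}$. Consequently it suffices to prove $\lambda(Q)^{-d} \in L^p(\Omega)$. By the layer-cake formula,
$$
\mathbb{E}\big[\lambda(Q)^{-dp}\big] = dp\int_0^\infty \epsilon^{-dp-1}\,\mathbb{P}\{\lambda(Q) \leq \epsilon\}\,d\epsilon,
$$
which is finite as soon as $\mathbb{P}\{\lambda(Q) \leq \epsilon\} \leq C\epsilon^{q}$ for all small $\epsilon$ with $q > dp$ (near $0$ the integrand is $O(\epsilon^{q-dp-1})$, near $\infty$ it is $O(\epsilon^{-dp-1})$). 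So the whole problem is to upgrade the fixed-direction small-ball bound in the hypothesis to a uniform bound on the smallest eigenvalue.

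This upgrade is exactly where I expect the main obstacle to lie: the hypothesis controls $\mathbb{P}\{v^T Q v \leq \epsilon\}$ only for each \emph{fixed} $v$, whereas $\{\lambda(Q)\leq\epsilon\} = \{\exists\,\|v\|=1:\ v^T Q v \leq \epsilon\}$ is an uncountable union over directions and is far larger than any single slice. To bridge this I would use a covering argument. First I would record the Lipschitz estimate $|v^T Q v - w^T Q w| = |(v-w)^T Q(v+w)| \leq 2\|Q\|_{\mathrm{op}}\,\|v-w\|$ for unit vectors $v,w$, where the operator norm is dominated by the Frobenius norm $\|Q\|_{\mathrm{op}} \leq \big(\sum_{i,l} Q_{i,l}^2\big)^{1/2}$ and hence has moments of all orders by assumption. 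Then I would fix a $\delta$-net $\{v_1,\dots,v_n\}$ of the unit sphere $S^{d-1}$ with cardinality $n \leq C\delta^{-(d-1)}$, so that every unit $v$ lies within $\delta$ of some $v_k$ and $v^T Q v \geq v_k^T Q v_k - 2\|Q\|_{\mathrm{op}}\,\delta$; taking the infimum over $v$ gives $\lambda(Q) \geq \min_k v_k^T Q v_k - 2\|Q\|_{\mathrm{op}}\,\delta$.

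To turn this into a probability bound I would truncate the random norm $\beta := \|Q\|_{\mathrm{op}}$. Fixing $a>0$ and choosing $\delta = \epsilon^{1+a}/2$, on the event $\{\beta \leq \epsilon^{-a}\}$ one has $2\beta\delta \leq \epsilon$, so that $\{\lambda(Q)\leq\epsilon,\ \beta \leq \epsilon^{-a}\} \subseteq \{\min_k v_k^T Q v_k \leq 2\epsilon\}$. I would then split
$$
\mathbb{P}\{\lambda(Q)\leq\epsilon\} \leq \sum_{k=1}^n \mathbb{P}\{v_k^T Q v_k \leq 2\epsilon\} + \mathbb{P}\{\beta > \epsilon^{-a}\}.
$$
Applying the hypothesis with a large exponent $r$ and the net cardinality bound, the first term is at most $n\,(2\epsilon)^{r} \leq C\,\epsilon^{\,r-(1+a)(d-1)}$; by Chebyshev and the finiteness of all moments of $\beta$, the second term is at most $\epsilon^{\,as}\,\mathbb{E}[\beta^{s}]$ for every $s$. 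Choosing $r$ and $s$ large makes the exponent of $\epsilon$ as large as we please, yielding $\mathbb{P}\{\lambda(Q)\leq\epsilon\} \leq C\epsilon^{q}$ with $q > dp$, and feeding this into the layer-cake integral above gives $\mathbb{E}[\lambda(Q)^{-dp}]<\infty$, hence $(\det Q)^{-1}\in L^p(\Omega)$. The Lipschitz bound, the net count, and the final tail integral are all routine; the genuine difficulty is the net-plus-truncation passage from the per-direction small-ball estimate to the uniform smallest-eigenvalue estimate.
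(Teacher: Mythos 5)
Your proposal is correct, and it is essentially the standard proof of this result: the paper itself states the lemma without proof, citing Nualart's Lemma 2.3.1, and the argument given there is exactly your route --- reduce to the smallest eigenvalue via $\det Q \geq \lambda(Q)^d$, upgrade the fixed-direction bound to a uniform one by a finite net on the sphere combined with the Lipschitz estimate $|v^TQv - w^TQw| \leq 2\|Q\|_{\mathrm{op}}\|v-w\|$ and a Chebyshev truncation of $\|Q\|_{\mathrm{op}}$ (which has all moments since the entries do), then conclude with the layer-cake integral. No gaps; your choice of net scale $\delta = \epsilon^{1+a}/2$ versus Nualart's $\epsilon^2$ is an inessential variation.
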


The Burkholder-Davis-Gundy's (BDG) inequality plays an important role in stochastic analysis associated with It\'o integral.

\begin{lemma}(Burkholder-Davis-Gundy)\cite[Theorem 7.3]{Ma2008}\label{L1.2.4} 
	For any $p > 0$, there exist two positive constants $c_p < C_p$ such that for all $d$-dimensional martingales $X(t) = \int_{0}^{t} f(s) dW_s$ $(t \in [0,T])$ with $f \in  \mathbb{L}^2_{d\times m}(0,T)$, there holds 
	\begin{equation*}
		c_p\mathbb{E}[(\int_{0}^{t} \|f(s)\|^2 ds)^{\frac{p}{2}}] \leq \mathbb{E}[\sup_{0 \leq \tau \leq t} \|\int_{0}^{\tau} f(s) dW_s\|^p] \leq 	C_p\mathbb{E}[(\int_{0}^{t} \|f(s)\|^2 ds)^{\frac{p}{2}}].  
	\end{equation*} 
\end{lemma}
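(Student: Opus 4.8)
This is the classical Burkholder--Davis--Gundy inequality, so rather than citing \cite{Ma2008} blindly I would reconstruct it from the structure of continuous martingales. First I would observe that $X(t)=\int_0^t f(s)\,dW_s$ is a continuous $\mathbb{R}^d$-valued local martingale with $X(0)=0$, and that the trace of its matrix of quadratic covariations is $\sum_{i=1}^d\langle X^i\rangle_t=\int_0^t\|f(s)\|^2\,ds$ (here $\|f(s)\|$ is the Hilbert--Schmidt norm of the $d\times m$ matrix $f(s)$, consistent with the integrand in the statement). Write $\langle X\rangle_t$ for this scalar increasing process and $X^*_t=\sup_{0\le\tau\le t}\|X_\tau\|$. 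By a localization argument --- stopping at $\tau_n=\inf\{t:\|X_t\|\vee\langle X\rangle_t\ge n\}$ and letting $n\to\infty$ through monotone convergence and Fatou's lemma --- it suffices to prove both inequalities under the extra assumption that $X$ and $\langle X\rangle$ are bounded, so that every expectation below is a priori finite.

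For the upper estimate in the range $p\ge2$ I would use It\^o's formula. Since $x\mapsto\|x\|^p$ is $C^2$ for $p\ge2$, applying It\^o to $\|X_t\|^p$ produces a local martingale part plus a drift controlled in absolute value by $\tfrac{p(p-1)}{2}\int_0^t\|X_s\|^{p-2}\|f(s)\|^2\,ds$. Taking expectations annihilates the martingale term, and bounding $\|X_s\|^{p-2}\le(X^*_t)^{p-2}$ gives $\mathbb{E}[\|X_t\|^p]\le\tfrac{p(p-1)}{2}\,\mathbb{E}[(X^*_t)^{p-2}\langle X\rangle_t]$. Feeding this into Doob's $L^p$ maximal inequality $\mathbb{E}[(X^*_t)^p]\le(\tfrac{p}{p-1})^p\,\mathbb{E}[\|X_t\|^p]$ (valid because $\|X_t\|$ is a nonnegative submartingale) yields $\mathbb{E}[(X^*_t)^p]\le c_p\,\mathbb{E}[(X^*_t)^{p-2}\langle X\rangle_t]$. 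Applying H\"older with exponents $\tfrac{p}{p-2}$ and $\tfrac{p}{2}$ and then dividing by the finite factor $\mathbb{E}[(X^*_t)^p]^{(p-2)/p}$ produces $\mathbb{E}[(X^*_t)^p]\le C_p\,\mathbb{E}[\langle X\rangle_t^{p/2}]$, which is the right-hand inequality of the lemma.

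To cover the full range $0<p<\infty$ and to obtain the matching lower bound $c_p\,\mathbb{E}[\langle X\rangle_t^{p/2}]\le\mathbb{E}[(X^*_t)^p]$, It\^o's formula is no longer available because $\|x\|^p$ fails to be $C^2$ near the origin when $p<2$. Here I would pass through the Burkholder--Gundy \emph{good-$\lambda$} inequalities: using the $L^2$ isometry on suitably stopped integrals one establishes domination estimates of the form $\mathbb{P}(X^*_t>\delta\lambda,\ \langle X\rangle_t^{1/2}\le\beta\lambda)\le\eta\,\mathbb{P}(X^*_t>\lambda)$, together with the symmetric statement interchanging the two processes, for appropriate $\delta>1$ and small $\beta,\eta$. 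Multiplying by $p\lambda^{p-1}$ and integrating over $\lambda\in(0,\infty)$ transfers these tail comparisons into the two-sided $L^p$ comparison of $X^*_t$ and $\langle X\rangle_t^{1/2}$ for every $p>0$. In the one-dimensional case one could instead invoke the Dambis--Dubins--Schwarz time change, writing $X_t=B_{\langle X\rangle_t}$ for a Brownian motion $B$ and reducing the claim to the elementary scaling and reflection estimates for $\sup_{s\le u}|B_s|$.

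The main obstacle is exactly this sub-$L^2$ regime together with the lower bound: the clean It\^o/Doob computation delivers only the upper inequality and only for $p\ge2$, whereas extending to all $p>0$ and to the reverse direction forces one into the more delicate good-$\lambda$ (or time-change) machinery. The remaining care lies in the bookkeeping --- tracking the dependence of the constants $c_p,C_p$ on $p$ (and, for the vector-valued statement, on the dimensions $d$ and $m$), and verifying that the localization argument genuinely removes the temporary integrability assumptions so that the final inequality holds for all $f\in\mathbb{L}^2_{d\times m}(0,T)$.
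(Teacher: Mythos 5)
The paper does not prove this lemma at all: it is imported verbatim as a citation of \cite[Theorem 7.3]{Ma2008}, so there is no internal argument to compare against. Your reconstruction is the standard textbook proof and its outline is sound: localization to reduce to bounded $X$ and $\langle X\rangle$; It\^o's formula on $\|X_t\|^p$ (valid for $p\ge 2$) combined with Doob's $L^p$ maximal inequality and H\"older to get the upper bound; then good-$\lambda$ inequalities (or Dambis--Dubins--Schwarz in dimension one) for the sub-$L^2$ range and the reverse inequality. Two remarks on the gap between your sketch and a complete proof. First, the good-$\lambda$ step, which you correctly identify as the crux, is only asserted; establishing the tail comparison $\mathbb{P}(X^*_t>\delta\lambda,\ \langle X\rangle_t^{1/2}\le\beta\lambda)\le\eta\,\mathbb{P}(X^*_t>\lambda)$ with $\eta$ small enough to close the integration argument is the genuinely technical part, and it is where most of the work of the classical proofs lives. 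Second, the source the paper cites actually proceeds somewhat differently in the range $0<p<2$: Mao derives that case and the lower bound from the $p=2$ isometry via stopping-time domination arguments with explicit constants, avoiding good-$\lambda$ machinery altogether; that route is more elementary and yields concrete $c_p,C_p$, whereas the good-$\lambda$ approach is more general (it extends to arbitrary continuous local martingales and to moment functions beyond powers). Either route completes the proof; yours is correct in strategy but would need the domination estimates filled in before it stands on its own.
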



\section{Existence of density function}
In this section, we will study the existence of density function to the solution of MVE under Lipschitz continuity of coefficients. Consider a general Mckean-Vlasov equation 
\begin{equation*}\label{e1.1.1}
	\begin{aligned}
		dX(t)&= b(t,X(t), \mathcal{L}[X(t)]) dt + \sigma(t, X(t), \mathcal{L}[X(t)]) dW_t, \\
		X(0) &= \xi,
	\end{aligned}
\end{equation*}
where $b$ and $\sigma$ are the drift and diffusion coefficients, $\xi$ is an $\mathcal{F}$-measurable random variable. Its integral form is given by

\begin{equation}\label{e1.3.1}
	X(t) = \xi + \int_{0}^{t}b(s,X(s),\mathcal{L}[X(s)]) ds+ \int_{0}^{t}\sigma(s, X(s), \mathcal{L}[X(s)]) dW_s.
\end{equation}
We assume for some $p\geq 2$,
\begin{itemize}
	\item[(H1)] $\xi \in L_d^p(\Omega)$ is independent of $W_t$ for all $t >0$.
	\item[(H2)]
	$b: [0,T]\times \mathbb{R}^d \times \mathcal{P}_2(\mathbb{R}^d) \rightarrow \mathbb{R}^d$ and $ \sigma: [0,T]\times \mathbb{R}^d \times \mathcal{P}_2(\mathbb{R}^{d}) \rightarrow \mathbb{R}^{d\times m}$ satisfy uniform Lipschitz condition, i.e.\ there exists $L>0$ such that for all $t \in [0,T]$, $x,x' \in \mathbb{R}^d$ and $\mu,\mu' \in \mathcal{P}_2(\mathbb{R}^d)$
	\begin{equation*}
		\begin{aligned}
			\| b(t,x,\mu) - b(t, x', \mu')\| \leq L(\| x-x'\| + W_2(\mu,\mu')) ,\\
			\|  \sigma(t,x,\mu) - \sigma(t, x', \mu') \| \leq L(\| x-x'\| + W_2(\mu,\mu')).
		\end{aligned}
	\end{equation*}
\end{itemize}

Define a matrix value function $A(t,x,\mu) = \sigma(t,x,\mu)\sigma^T(t,x,\mu)$.

\begin{itemize}
	\item[(H3)] There exists $\lambda > 0$ such that $v^TA(t,x,\mu)v \geq \lambda\| v\|^2$ for all $v \in \mathbb{R}^d$ and $(t,x,\mu) \in [0,T] \times \mathbb{R}^d \times \mathcal{P}_2(\mathbb{R}^d)$. 
\end{itemize}   

\begin{remark}\label{L1.3.1}
	1. From (H2) it follows that $b$ and $\sigma$ are of linear growth, i.e.\ for any  $t \in [0,T]$, $x \in \mathbb{R}^d$, $\mu \in \mathcal{P}_2(\mathbb{R}^d)$, $b$ and $\sigma$, there holds 
	\begin{equation*}
		\| b(t,x,\mu)\| \vee \|  \sigma(t,x,\mu) \|\leq C(1 +\|x\|+ W_2(\mu,\delta_0)),
	\end{equation*}
	where $\delta_0$ is the  Dirac measure on $\mathbb{R}^d$ centered at origin.
	
	2. (H3) implies that $A(t,x,\mu)$ is a symmetric,  strictly positive definite matrix with bounded inverse. 
	
	3. Let $\sigma = (\sigma^1, \dots, \sigma^m)$ $(\sigma^k \in \mathbb{R}^d, k=1,\dots,m)$, then  \eqref{e1.3.1} can be equivalently rewritten as
	\begin{equation}\label{e1.3.2}
		X(t) = \xi + \int_{0}^{t}b(s,X(s),\mathcal{L}[X(s)]) ds+ \sum_{k=1}^m\int_{0}^{t}\sigma^k(s, X(s), \mathcal{L}[X(s)]) dW_s^k.
	\end{equation}
\end{remark}

In order to simplify notations, we will denote by $b(s) = b(s,X(s),\mathcal{L}[X(s)])$ and $\sigma^k(s) = \sigma^k(s,X(s),\mathcal{L}[X(s)])$ in forthcoming analysis if no confusion occurs.

Define a filtration $\mathcal{F}_t = \sigma\{\xi, W(s), s\in[0,t]\}$. By $\mathbb{L}^p_d(0,T)$ we denote the set of $\mathcal{F}_t$-progressively measurable  $\mathbb{R}^d$-valued processes $X(t)$  satisfying $\|X(t)\|^p_{\mathbb{L}^p_d}=\mathbb{E}[\int_{0}^{T}\| X(t)\|^p dt] < \infty $. As $d=1$, we write $ \mathbb{L}^p(0,T) :=\mathbb{L}^p_1(0,T)$. According to \cite[Theorem 1.7]{Ca2016}, under (H1) and (H2), \eqref{e1.3.1} possesses a unique solution $X(t) \in \mathbb{L}^2_d(0,T)$. Moreover, we  have extra regularity estimate for $X(t)$. 

\begin{lemma}\label{L1.3.2}
	Assume (H1), (H2) and $p \geq 2$. Let $X(t)$ be a solution to \eqref{e1.3.1}. Then,
	\begin{equation*}
		\mathbb{E}[ \| X(t)\|^p] \leq C(1+ \mathbb{E}[\|\xi\|^p]), \quad \forall t\in[0,T],
	\end{equation*} 
	where $C =C(T,p) > 0$ is a constant.
\end{lemma}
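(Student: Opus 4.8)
The plan is to establish a standard moment estimate for the solution of the McKean--Vlasov equation \eqref{e1.3.1} using the integral form \eqref{e1.3.2}, the linear growth of the coefficients from part 1 of Remark \ref{L1.3.1}, the Burkholder--Davis--Gundy inequality (Lemma \ref{L1.2.4}), and Gronwall's inequality. The one wrinkle, compared to a classical SDE, is the presence of the law $\mathcal{L}[X(s)]$ in the coefficients; I will handle this by controlling $W_2(\mathcal{L}[X(s)],\delta_0)$ in terms of the moment $\mathbb{E}[\|X(s)\|^p]$ via inequality \eqref{e1.2.1}.

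First I would take the $p$-th power of norms in \eqref{e1.3.2} and apply the elementary inequality $\|a+b+c\|^p \le 3^{p-1}(\|a\|^p+\|b\|^p+\|c\|^p)$ to split into an initial-data term, a drift term, and a diffusion term. The initial term contributes $3^{p-1}\mathbb{E}[\|\xi\|^p]$. For the drift term, after applying H\"older's inequality in the time integral (using the finite horizon $T$), I would bound $\|b(s)\|^p$ via the linear growth estimate
\begin{equation*}
	\|b(s,X(s),\mathcal{L}[X(s)])\|^p \le C\bigl(1 + \|X(s)\|^p + W_2(\mathcal{L}[X(s)],\delta_0)^p\bigr).
\end{equation*}
The key observation is that, by \eqref{e1.2.1} applied with $Y=0$ (so $\mathcal{L}[Y]=\delta_0$), one has $W_2(\mathcal{L}[X(s)],\delta_0)^p \le \mathbb{E}[\|X(s)\|^p]$, which is a deterministic quantity. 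Hence after taking expectations the law-dependent term is absorbed into $\mathbb{E}[\|X(s)\|^p]$, doubling its coefficient but introducing no new difficulty.

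For the diffusion term I would apply the BDG inequality from Lemma \ref{L1.2.4} to move the $p$-th moment inside the time integral of $\|\sigma(s)\|^2$, then use H\"older and the same linear-growth-plus-\eqref{e1.2.1} argument. Collecting the three contributions and writing $\varphi(t) = \mathbb{E}[\|X(t)\|^p]$, I arrive at an integral inequality of the form
\begin{equation*}
	\varphi(t) \le C_1\bigl(1 + \mathbb{E}[\|\xi\|^p]\bigr) + C_2\int_0^t \varphi(s)\, ds,
\end{equation*}
where $C_1,C_2$ depend only on $T$, $p$, $L$, and the BDG constant $C_p$. To justify that the left side is finite so that Gronwall applies rigorously, I would first note that $X \in \mathbb{L}^2_d(0,T)$ from the cited well-posedness, or run the argument with a stopping-time localization / a priori finiteness of $\sup_{t}\varphi(t)$; this bookkeeping is the only mildly delicate point. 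Finally, Gronwall's lemma yields $\varphi(t) \le C_1(1+\mathbb{E}[\|\xi\|^p])e^{C_2 T}$, which gives the claimed bound with $C = C(T,p)$ uniform in $t \in [0,T]$.

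The main obstacle is essentially presentational rather than conceptual: one must verify a priori that $\mathbb{E}[\|X(t)\|^p]$ is finite (and locally integrable in $t$) before invoking Gronwall, since the solution is only guaranteed in $\mathbb{L}^2_d(0,T)$ by the quoted existence theorem. This is resolved by the standard device of applying the estimate to a truncated process $X(t\wedge\tau_n)$ with $\tau_n=\inf\{t:\|X(t)\|\ge n\}$, deriving the Gronwall bound uniformly in $n$, and passing to the limit by Fatou's lemma. Everything else is a routine combination of the linear growth bound, \eqref{e1.2.1}, BDG, H\"older, and Gronwall.
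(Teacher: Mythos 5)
Your proposal is correct and follows essentially the same route as the paper: split \eqref{e1.3.2} into initial, drift, and diffusion terms, reduce the law-dependent coefficients to $\mathbb{E}[\|X(s)\|^p]$ via linear growth and the Wasserstein-moment bound, then apply BDG, H\"older, and Gronwall. The only minor differences are that the paper bounds $W_2(\mathcal{L}[X(t)],\delta_0)^p$ through $(\mathbb{E}[\|X(t)\|^2])^{p/2}\leq \mathbb{E}[\|X(t)\|^p]$ (since \eqref{e1.2.1} at exponent $p$ controls $W_p$, not $W_2$, your shortcut implicitly also needs $W_2\leq W_p$), and the paper invokes Gronwall directly without the stopping-time localization you add for rigor.
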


\begin{proof}
	First, we study  estimates for $b(t)$ and $\sigma(t)$. In terms of \eqref{e1.2.1} and Remark \ref{L1.3.1}, direct computation gives
	\begin{equation*}
		\begin{aligned}
			\mathbb{E}[\|b(t)\|^p] &\ \leq \mathbb{E}[ C^p(1  + \| X(t)\| + W_2(\mathcal{L}[X(t)], \delta_0))^p]\\
			&\ \leq 3^{p-1}C^p\mathbb{E}[ 1  + \| X(t)\|^p + (W_2(\mathcal{L}[X(t)], \delta_0))^p]\\
			&\ \leq C(p)\mathbb{E}[ 1  + \| X(t)\|^p + \mathbb{E}[\|X(t)\|^2]^{\frac p2}]\\
			&\ \leq C(1 + \mathbb{E}[\|X(t) \|^p]),
		\end{aligned}
	\end{equation*}
	where $C = C(p)>0$ is a constant.
	
	In a similar way, we obtain $\mathbb{E}[\|\sigma(t)\|^p] \leq C(1 + \mathbb{E}[\|X(t) \|^p])$.
	Then applying BDG inequality and  H\"older inequality, we have
	\begin{align*}
		&\mathbb{E}[\|X(t)\|^p] \\
		\leq&3^{p-1}(\mathbb{E}[\|\xi\|^p] + \mathbb{E}[\|\int_{0}^{t}b(s) ds\|^p]
		+\mathbb{E}[\|\int_{0}^{t}\sigma(s) dW_s\|^p])\\
		\leq&3^{p-1}(\mathbb{E}[\|\xi\|^p] +\mathbb{E}[(\int_{0}^{t}\|b(s)\|ds)^p] 
		+ \mathbb{E} [\sup_{0 \leq \tau \leq t} \|\int_{0}^{\tau} \sigma(s) dW_{s}\|^p])\\
		\leq&3^{p-1}\mathbb{E}[\|\xi\|^p] +3^{p-1}T^{p-1}\mathbb{E}[\int_{0}^{t}\|b(s)\|^p ds] 
		+3^{p-1}C(p)\mathbb{E} [(\int_{0}^{t} \|\sigma(s)\|^2 ds)^{\frac{p}{2}}]\\
		\leq&C\mathbb{E}[\|\xi\|^p] +  C\mathbb{E}[\int_{0}^{t} \|b(s)\|^p  ds] + CT^{\frac{p-2}{2}}\mathbb{E} [\int_{0}^{t}\| \sigma(s)\|^p ds]\\
		\leq&C(1+\mathbb{E}[\|\xi\|^p]) +C\int_{0}^{t}\mathbb{E}[\|X(s)\|^p] ds,
	\end{align*}
	where $C = C(T,p) > 0$ is a constant.    
	By Gronwall's inequality, we obtain
	\begin{equation*}
		\mathbb{E}[\| X(t)\|^p] \leq C(1+ \mathbb{E}[\|\xi\|^p])e^{Ct} \leq C(1+ \mathbb{E}[\|\xi\|^p]),
	\end{equation*}
	which completes the proof.  
\end{proof}

The next lemma  provides a technique for verifying  Malliavin differentiability of a random variable $F$.

\begin{lemma}\cite[Lemma 1.2.3]{Nu2006}\label{L1.3.3}
	Let $\{F_n, n \geq 1\}$ be a sequence of random variables in $\mathbb{D}^{1,2}$ that converges to $F$ in $L^2(\Omega)$ and such that
	$\sup_{n} \mathbb{E}[\| \mathcal{D}F_n \|_{\mathbb{H}}^2] < \infty$. Then $F$ belongs to $\mathbb{D}^{1,2}$ and the sequence of derivatives
	$\{\mathcal{D}F_n, n \geq 1\}$ converges to $\mathcal{D}F$ in the weak topology of $L^2(\Omega; \mathbb{H})$.
\end{lemma}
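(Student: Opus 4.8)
The plan is to combine the weak compactness of bounded sequences in the Hilbert space $L^2(\Omega;\mathbb{H})$ with the closability of the Malliavin derivative, the latter being expressed through the duality formula relating $\mathcal{D}$ and its adjoint, the divergence operator $\delta$. The point is that a bound on $\mathcal{D}F_n$ alone only yields a candidate limit after extracting a subsequence; identifying that limit as a genuine Malliavin derivative requires a mechanism to transfer information from $\mathcal{D}F_n$ back to $F_n$, and this is precisely what the integration-by-parts identity provides.

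First I would record the duality identity that characterizes $\mathbb{D}^{1,2}$. For any $F\in\mathbb{D}^{1,2}$, any smooth random variable $G\in\mathcal{S}$ and any $h\in\mathbb{H}$, writing $\delta(Gh)=GW(h)-(\mathcal{D}G,h)_{\mathbb{H}}$, the relation $\mathbb{E}[(\mathcal{D}F,Gh)_{\mathbb{H}}]=\mathbb{E}[F\delta(Gh)]$ gives
\begin{equation*}
\mathbb{E}[G(\mathcal{D}F,h)_{\mathbb{H}}]=\mathbb{E}[FGW(h)]-\mathbb{E}[F(\mathcal{D}G,h)_{\mathbb{H}}].
\end{equation*}
Conversely, the existence of some $\alpha\in L^2(\Omega;\mathbb{H})$ satisfying this identity for all such $G$ and $h$ is exactly the criterion for $F$ to lie in $\mathbb{D}^{1,2}$ with $\mathcal{D}F=\alpha$; this is the manifestation of the closability of $\mathcal{D}$ that I intend to use.

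Next, since $\sup_n\mathbb{E}[\|\mathcal{D}F_n\|_{\mathbb{H}}^2]<\infty$, the sequence $\{\mathcal{D}F_n\}$ is bounded in $L^2(\Omega;\mathbb{H})$, so by the Banach--Alaoglu theorem there is a subsequence $\{\mathcal{D}F_{n_k}\}$ converging weakly to some $\alpha\in L^2(\Omega;\mathbb{H})$. I would then write the identity above for each $F_{n_k}$ and pass to the limit in $k$: the left-hand side tends to $\mathbb{E}[G(\alpha,h)_{\mathbb{H}}]$ because $Gh\in L^2(\Omega;\mathbb{H})$ and $\mathcal{D}F_{n_k}\rightharpoonup\alpha$, while the right-hand side tends to $\mathbb{E}[FGW(h)]-\mathbb{E}[F(\mathcal{D}G,h)_{\mathbb{H}}]$ because $F_{n_k}\to F$ in $L^2(\Omega)$ and both $GW(h)$ and $(\mathcal{D}G,h)_{\mathbb{H}}$ belong to $L^2(\Omega)$. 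Hence $\alpha$ satisfies the characterizing identity, so $F\in\mathbb{D}^{1,2}$ and $\mathcal{D}F=\alpha$.

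The step I expect to require the most care is the passage to the limit on the right-hand side, where one must confirm that the test random variables $GW(h)$ and $(\mathcal{D}G,h)_{\mathbb{H}}$ are genuinely square-integrable so that $L^2(\Omega)$-convergence of $F_{n_k}$ can be invoked; this is guaranteed by $G\in\mathcal{S}$ and the polynomial-growth structure of smooth functionals. Finally, to upgrade weak convergence along the subsequence to weak convergence of the entire sequence, I would appeal to the standard subsequence principle: $\{\mathcal{D}F_n\}$ is bounded, so every subsequence admits a weakly convergent further subsequence, and by the argument above every such limit must coincide with $\mathcal{D}F$; therefore the whole sequence $\{\mathcal{D}F_n\}$ converges weakly to $\mathcal{D}F$ in $L^2(\Omega;\mathbb{H})$, which completes the proof.
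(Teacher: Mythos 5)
Your proposal is correct and follows essentially the same route as the source the paper cites for this lemma (the paper itself gives no proof, quoting Nualart's Lemma 1.2.3 verbatim): weak compactness of the bounded sequence $\{\mathcal{D}F_n\}$ in $L^2(\Omega;\mathbb{H})$, the integration-by-parts identity to identify the weak limit, and the subsequence principle to upgrade to convergence of the whole sequence. The one step you should state more carefully is the ``conversely'' claim---that existence of $\alpha\in L^2(\Omega;\mathbb{H})$ satisfying the duality identity for all $G\in\mathcal{S}$, $h\in\mathbb{H}$ already forces $F\in\mathbb{D}^{1,2}$ with $\mathcal{D}F=\alpha$; this is true, but it is strictly stronger than bare closability of $\mathcal{D}$ (it amounts to the elementary processes $Gh$ forming a core for the divergence operator $\delta$, provable e.g.\ by a Wiener--chaos computation), and it can be bypassed entirely by applying Mazur's lemma to convert the weak convergence of $\mathcal{D}F_{n_k}$ into strong $L^2(\Omega;\mathbb{H})$-convergence of convex combinations of the $\mathcal{D}F_{n_k}$ (the same convex combinations of the $F_{n_k}$ still converge to $F$ in $L^2(\Omega)$), after which the closedness of $\mathcal{D}$ on $\mathbb{D}^{1,2}$ gives the conclusion directly.
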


\begin{remark}\label{L1.3.4}
	We can easily extend this lemma to more general case.   
	For any $p \geq 2$, assume that $ F_n  \in \mathbb{D}^{1,p}$ converges to $F$ in $L^p(\Omega)$ and $\mathbb{E}[\|\mathcal{D}F_n\|^p_{\mathbb{H}}] \leq C$ for $n \geq 1$. According to this lemma, $\mathcal{D}F_n $ converges to $\mathcal{D}F \in L^2(\Omega; \mathbb{H}) $ in the weak topology of $L^2(\Omega; \mathbb{H})$. Let $F_{n(k)}$ be any subsequence of $F_n$, then $\mathbb{E}[\|\mathcal{D}F_{n(k)}\|^p_{\mathbb{H}}] \leq C$ $(k \geq 1)$ and there exists a subsequence of Malliavin derivatives $\mathcal{D}F_{n(k)}$ such that it converges to $\beta \in L^p(\Omega; \mathbb{H})\subset L^2(\Omega; \mathbb{H})$ in the weak topology of $L^p(\Omega; \mathbb{H})$. The uniqueness of limit implies $\beta = \mathcal{D}F$. Thus, $F \in \mathbb{D}^{1,p}$ and  $\mathcal{D}F_n$ converges to $\mathcal{D}F$ in the weak topology of $L^p(\Omega; \mathbb{H})$.
\end{remark}

In order to study a priori estimate for the Malliavin derivative of solution $X(t)$, we construct two sequences $X_n \in \mathbb{L}^2_d(0,T)$ and $\mu^n \in C([0,T]; \mathcal{P}_2(\mathbb{R}^d))$ as follows. Take $X_0 = \xi$ and $\mu^0=(\mu^0_t)_{0\leq t\leq T} = \mathcal{L}[\xi]$.  By induction, for any $n\geq 1$, define $X_n$ as the solution to a stochastic differential equation
\begin{equation}\label{e1.3.3}
	X_n(t) = \xi + \int_{0}^{t}b(s,X_n(s), \mu^{n-1}_s) ds + \sum_{k=1}^{m} \int_{0}^{t}\sigma^k(s, X_n(s), \mu^{n-1}_s) dW_s^k, \quad t\in [0,T].
\end{equation}
Then define $\mu^n = (\mu^n_t)_{0\leq t \leq T} =(\mathcal{L}[X_{n}(t)])_{0\leq t \leq T}$. According to \cite[Theorem 1.7]{Ca2016} and its proof, $X_n(t)$ converges to the solution $X(t)$ of \eqref{e1.3.1} in $\mathbb{L}^2_d(0,T)$ and $\mu^n$ tends to $\mu = (\mu_t)_{0\leq t \leq T} = (\mathcal{L}[X(t)])_{0\leq t\leq T}$ in $C([0,T]; \mathcal{P}_2(\mathbb{R}^d))$  as $n \to \infty$, which means 
\begin{equation}\label{e1.3.3-1}
	\lim_{n \to \infty} \sup_{t\in[0,T]} W_2(\mu^n_t, \mathcal{L}[X(t)]) = 0.
\end{equation}
Moreover, we investigate the convergence of $X_n(t)$  to  $X(t)$ in the $L^p_d(\Omega)$ space.

\begin{lemma}\label{L1.3.5}
	Assume (H1) and (H2). Let $X(t)$ be a solution to \eqref{e1.3.2}. Then,  
	\begin{equation}\label{e1.3.4}
		\lim_{n \to \infty}	\mathbb{E}[\|X_n(t) - X(t)\|^p] = 0, \quad \forall t \in [0,T]. 
	\end{equation} 
\end{lemma}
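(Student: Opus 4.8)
The plan is to derive a Gronwall-type integral inequality for $\phi_n(t) := \mathbb{E}[\|X_n(t) - X(t)\|^p]$, in which the law discrepancy $W_2(\mu^{n-1}_s,\mu_s)$ (with $\mu_s = \mathcal{L}[X(s)]$) enters as a deterministic vanishing forcing term while the state discrepancy $\|X_n(s)-X(s)\|$ feeds back into itself. Subtracting \eqref{e1.3.2} from \eqref{e1.3.3} gives
\[
X_n(t) - X(t) = \int_0^t \big[b(s, X_n(s), \mu^{n-1}_s) - b(s)\big]\, ds + \sum_{k=1}^m \int_0^t \big[\sigma^k(s, X_n(s), \mu^{n-1}_s) - \sigma^k(s)\big]\, dW_s^k.
\]
Before estimating, I would first record that $\phi_n$ is finite and bounded on $[0,T]$, which is what legitimizes Gronwall's inequality. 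Since \eqref{e1.3.3} is a standard SDE whose coefficients are Lipschitz in the state variable with the measure argument frozen, a moment estimate entirely parallel to Lemma \ref{L1.3.2}—using the linear growth from Remark \ref{L1.3.1} together with the identity $W_2(\mu^{n-1}_s, \delta_0) = (\mathbb{E}[\|X_{n-1}(s)\|^2])^{1/2}$—yields $\sup_{t \in [0,T]} \mathbb{E}[\|X_n(t)\|^p] < \infty$ for each $n$, while Lemma \ref{L1.3.2} controls $\mathbb{E}[\|X(t)\|^p]$.

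Next I would take $p$-th moments in the displayed difference, using $\|a+b\|^p \le 2^{p-1}(\|a\|^p + \|b\|^p)$ to separate the drift and diffusion contributions, then H\"older's inequality on the drift integral and the BDG inequality (Lemma \ref{L1.2.4}) followed by H\"older on the stochastic integral, exactly as in the proof of Lemma \ref{L1.3.2}. Applying the Lipschitz bound (H2),
\[
\|b(s, X_n(s), \mu^{n-1}_s) - b(s)\| \le L\big(\|X_n(s) - X(s)\| + W_2(\mu^{n-1}_s, \mu_s)\big),
\]
and the analogous estimate for each $\sigma^k$, and splitting once more via $(a+b)^p \le 2^{p-1}(a^p+b^p)$, I would obtain an inequality of the form
\[
\phi_n(t) \le C \int_0^t \phi_n(s)\, ds + C\Big(\sup_{s \in [0,T]} W_2(\mu^{n-1}_s, \mu_s)\Big)^p, \qquad C = C(T,p,L),
\]
the key point being that the $W_2$ term is deterministic and factors out of the expectation.

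Finally, writing $\varepsilon_n := \big(\sup_{s}W_2(\mu^{n-1}_s, \mu_s)\big)^p$, the convergence \eqref{e1.3.3-1} (with $n-1 \to \infty$ as $n \to \infty$) gives $\varepsilon_n \to 0$. Gronwall's inequality then yields $\phi_n(t) \le C\varepsilon_n e^{Ct} \le C\varepsilon_n$ uniformly in $t \in [0,T]$, and letting $n \to \infty$ establishes \eqref{e1.3.4}. I expect the main difficulty here to be bookkeeping rather than conceptual: securing the a priori finiteness of $\phi_n$ so that Gronwall is applicable, and carefully tracking that the measure appearing in \eqref{e1.3.3} is indexed by $n-1$ while the state difference is between $X_n$ and $X$, so that both the feedback term and the forcing term are genuinely controlled by the already-established convergence \eqref{e1.3.3-1}.
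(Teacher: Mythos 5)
Your proposal is correct and follows essentially the same route as the paper: subtract \eqref{e1.3.3} from \eqref{e1.3.2}, estimate the $p$-th moment via H\"older and BDG together with the Lipschitz condition (H2), obtain a Gronwall inequality in which $W_2(\mu^{n-1}_s,\mathcal{L}[X(s)])$ appears as a deterministic forcing term, and conclude by \eqref{e1.3.3-1}. The only cosmetic differences are that the paper keeps the forcing as $\int_0^t (W_2(\mu^{n-1}_s,\mathcal{L}[X(s)]))^p\,ds$ and bounds it by the supremum after Gronwall, and that it leaves the a priori finiteness of the moments implicit, whereas you record it explicitly.
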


\begin{proof}
	By BDG inequality and H\"older inequality we have
	\begin{align*}
		&\mathbb{E}[ \| X_n(t) - X(t)\|^p]\\
		\leq& (m+1)^{p-1} \mathbb{E}[\| \int_{0}^{t} b(s,X_n(s),\mu^{n-1}_{s}) - b(s,X(s),\mathcal{L}[X(s)]) ds\|^p] \\
		&+(m+1)^{p-1} \sum_{k=1}^{m}\mathbb{E}[\| \int_{0}^{t} \sigma^k(s,X_n(s),\mu^{n-1}_{s}) - \sigma^k(s,X(s),\mathcal{L}[X(s)]) dW_{s}^k\|^p]\\
		\leq&C \mathbb{E}[ T^{p-1}\int_{0}^{t} \| b(s,X_n(s),\mu^{n-1}_{s}) - b(s,X(s),\mathcal{L}[X(s)])\|^p ds] \\
		&+C\sum_{k=1}^{m} \mathbb{E}[\sup_{0\leq \tau \leq t}\| \int_{0}^{\tau} \sigma^k(s,X_n(s),\mu^{n-1}_{s}) - \sigma^k(s,X(s),\mathcal{L}[X(s)]) dW_{s}^k\|^p]\\
		\leq&C\mathbb{E}[ \int_{0}^{t} 2^{p-1}L^p(\| X_n(s) - X(s)\|^p+ (W_2(\mu^{n-1}_{s} ,\mathcal{L}[X(s)]))^p) ds] \\
		&+C\sum_{k=1}^{m} C(p)\mathbb{E}[(\int_{0}^{t} \|\sigma^k(s,X_n(s),\mu^{n-1}_{s}) - \sigma^k(s,X(s),\mathcal{L}[X(s)])\|^2 ds)^{\frac{p}{2}}]\\
		\leq&C\mathbb{E}[ \int_{0}^{t} \| X_n(s) - X(s)\|^p+ (W_2(\mu^{n-1}_{s} ,\mathcal{L}[X(s)]))^p ds] \\
		&+C\sum_{k=1}^{m} T^{\frac{p-2}{2}} \mathbb{E}[ \int_{0}^{t} \|\sigma^k(s,X_n(s),\mu^{n-1}_{s}) - \sigma^k(s,X(s),\mathcal{L}[X(s)])\|^p ds]\\
		\leq&C\int_{0}^{t} \mathbb{E}[\| X_n(s) - X(s)\|^p] ds + C\int_{0}^{t}(W_2(\mu^{n-1}_{s} ,\mathcal{L}[X(s)]))^p ds.
	\end{align*}
	By Gronwall's inequality and \eqref{e1.3.3-1}, we get
	\begin{equation*}
		\begin{aligned}
			\mathbb{E}[ \| X_n(t) - X(t)\|^p] \leq& Ce^{Ct}\int_{0}^{t}(W_2(\mu^{n-1}_{s},\mathcal{L}[X(s)]))^p ds\\
			\leq& CTe^{CT} (\sup_{s \in [0,T]}W_2(\mu^{n-1}_{s},\mathcal{L}[X(s)]))^p \to 0, \text{ as } n\to \infty,
		\end{aligned}
	\end{equation*}
	which completes the proof. 
\end{proof}

According to \cite[Theorem 2.2.1]{Nu2006}, for any $1\leq j \leq m$, $n \geq 0$ and $t \in [0,T]$, $X_n(t) \in \mathbb{D}^{1,2}(\mathbb{R}^d)$ and its $j$-th component $\mathcal{D}_r^jX_n(t)$ $(r \in [0,T])$ of the Malliavin derivative $\mathcal{D}X_n(t)$ satisfies $\mathcal{D}_r^jX_n(t) = \mathcal{D}_r^jX_n(t)\mathbbm{1}_{\{r\leq t\}}$, where $\mathbbm{1}_{\{r\leq t\}}$ is an indicator function.
Applying Lemma \ref{L1.2.2-1}, we deduce that $b(t,X_n(t), \mu_t^{n-1})$ and $\sigma^k(t, X_n(t), \mu^{n-1}_t)$ $(1\leq k \leq m)$ belong to $\mathbb{D}^{1,2}(\mathbb{R}^d)$ and there exist $\mathcal{F}_t$-progressively measurable $\mathbb{R}^{d\times d}$-valued processes  $\bar{b}(t,\mu_t^{n-1})$ and $\bar{\sigma}^k(t,\mu_t^{n-1})$ uniformly bounded by $L$, such that
\begin{equation*}
	\begin{aligned}
		\mathcal{D}_r^j[b(t,X_n(t), \mu_t^{n-1})] &= \bar{b}(t, \mu^{n-1}_t)\mathcal{D}_r^jX_n(t)\mathbbm{1}_{\{r\leq t\}},\\
		\mathcal{D}_r^j[\sigma^k(t, X_n(t), \mu^{n-1}_t)] &=\bar{\sigma}^{k}(t, \mu^{n-1}_t) \mathcal{D}_r^jX_n(t)\mathbbm{1}_{\{r\leq t\}},
	\end{aligned}
\end{equation*}
Obviously $\mathcal{D}_r^jX(t) = 0$ for $0 \leq t < r \leq T$. And for $0 \leq r \leq t \leq T$, taking Malliavin derivative on both side of \eqref{e1.3.3} yeilds an SDE
\begin{equation*}\label{e1.3.5}
	\begin{aligned}
		\mathcal{D}_r^jX_n(t) =		&\ \int_{r}^{t} \bar{b}(s, \mu^{n-1}_s)\mathcal{D}_r^jX_n(s) ds \\ 
		&\ +\sigma^j(r, X_n(r), \mu^{n-1}_r) +\sum_{k=1}^{m}\int_{r}^{t} \bar{\sigma}^{k}(s, \mu^{n-1}_s) \mathcal{D}_r^jX_n(s) dW_s^k
	\end{aligned}
\end{equation*}  
with initial value   $\mathcal{D}_r^jX_n(r) = \sigma^j(r, X_n(r), \mu^{n-1}_r)$.
According to \eqref{e1.2.1}, Remark \ref{L1.3.1}, Lemma \ref{L1.3.2} and H\"older inequality,  we can estimate the initial value 
\begin{equation}\label{e1.3.6}
	\begin{aligned}
		\mathbb{E}[\| \sigma^j(r,X_{n}(r),\mu^{n-1}_{r})\|^p]
		\leq&\ C^p\mathbb{E}[(1+ \|X_{n}(r)\| + W_2(\mu_r^{n-1}, \delta_0))^p]\\
		\leq&\ 3^{p-1}C^p(1+\mathbb{E}[\|X_n(r)\|^p] + (W_2(\mu_r^{n-1}, \delta_0))^p)\\
		\leq&\ C(1+C(1 + \mathbb{E}[\|\xi\|^p])+	(\mathbb{E}[\|X_{n-1}(r)\|^2])^{\frac{p}{2}})\\
		\leq&\ C(1+\mathbb{E}[\|\xi\|^p]+	\mathbb{E}[\|X_{n-1}(r)\|^p])\\
		\leq&\ C(1+\mathbb{E}[\|\xi\|^p]),
	\end{aligned}
\end{equation}
where $C = C(T,p) > 0$ is a constant.

Now, we investigate the estimate of the  Malliavin derivative $\mathcal{D}X_n(t)$.  Applying BDG inequality, H\"older inequality and \eqref{e1.3.6}, we have 
\begin{align*}
	&\ \mathbb{E}[\| \mathcal{D}_r^jX_n(t)\|^p]\\
	\leq&\ (m+2)^{p-1}\mathbb{E}[\| \int_{r}^{t}\bar{b}(s, \mu_{s}^{n-1})\mathcal{D}_r^jX_n(s) ds\|^p \\
	&\ + \| \sigma^j(r, X_n(r), \mu_r^{n-1})\|^p   + \sum_{k=1}^{m} 
	\| \int_{r}^{t}\bar{\sigma}^{k}(s, \mu_{s}^{n-1})\mathcal{D}_r^jX_n(s) dW_{s}^k\|^p]\\
	\leq&\ C\mathbb{E}[(\int_{r}^{t}\|\bar{b}(s, \mu_{s}^{n-1})\mathcal{D}_r^jX_n(s)\|ds)^p] \\
	&\ + C(1+\mathbb{E}[\|\xi\|^p]) + C\sum_{k=1}^{m} 
	\mathbb{E}[(\sup_{r \leq \tau \leq t}\| \int_{r}^{\tau}\bar{\sigma}^{k}(s, \mu_{s}^{n-1})\mathcal{D}_r^jX_n(s) dW_{s}^k\|)^p]\\
	\leq&\ C\mathbb{E}[T^{p-1}\int_{r}^{t}\| \bar{b}(s, \mu_{s}^{n-1})\mathcal{D}_r^jX_n(s)\|^p ds]\\
	&\ + C(1+\mathbb{E}[\|\xi\|^p]) + C\sum_{k=1}^{m} C(p) \mathbb{E}[(\int_{r}^{t}\|\bar{\sigma}^{k}(s, \mu_{s}^{n-1})\mathcal{D}_r^jX_n(s)\|^2 ds)^{\frac{p}{2}}] \\
	\leq&\ C\mathbb{E}[\int_{r}^{t}L^p\|\mathcal{D}_r^jX_n(s)\|^p ds] \\
	&\ + C(1+\mathbb{E}[\|\xi\|^p]) + C\sum_{k=1}^{m} T^{\frac{p-2}{2}}\mathbb{E}[\int_{r}^{t}\|\bar{\sigma}^{k}(s, \mu_{s}^{n-1})\mathcal{D}_r^jX_n(s)\|^p ds]\\
	\leq&\ C(1+\mathbb{E}[\|\xi\|^p]) + C\int_{0}^{t} \mathbb{E}[\|\mathcal{D}_r^jX_n(s)\|^p] ds,
\end{align*}
where $C=C(T,p,L) >0$ is a constant.
From Gronwall's inequality, it follows that 
\begin{equation*}\label{e1.3.7}
	\mathbb{E}[\| \mathcal{D}_r^jX_{n}(t)\|^p]
	\leq C(1+\mathbb{E}[\|\xi\|^p])e^{Ct}
	\leq C(1+\mathbb{E}[\|\xi\|^p]).
\end{equation*}
According to Lemma \ref{L1.2.1-1}, we obtain 
\begin{equation*}\label{e1.3.9}
	\mathbb{E}[\|\mathcal{D}X_n(t)\|^p_{\mathbb{H}\otimes \mathbb{R}^d}] \leq C(1+\mathbb{E}[\|\xi\|^p]).
\end{equation*}	

By Remark \ref{L1.3.4} and \eqref{e1.3.4}, we have $X(t) \in \mathbb{D}^{1,p}(\mathbb{R}^d)$ and $\mathcal{D}X_n(t)$ converges to $\mathcal{D}X(t)$ in the weak topology of $L^p(\Omega; \mathbb{H} \otimes \mathbb{R}^d)$ as $n \to \infty$. Moreover,  $\mathcal{D}_r^jX(t) = \mathcal{D}_r^jX(t)\mathbbm{1}_{\{r \leq t\}}$ satisfies  for $0 \leq r \leq t \leq T$, 
\begin{equation}\label{e1.3.10}
	\begin{aligned}
		\mathcal{D}_r^jX(t) = &\ \int_{r}^{t} \bar{b}(s, \mathcal{L}[X(s)])\mathcal{D}_r^jX(s) ds \\
		&\ +\sigma^j(r, X(r), \mathcal{L}[X(r)])
		+\sum_{k=1}^{m}\int_{r}^{t} \bar{\sigma}^{k}(s, \mathcal{L}[X(s)])\mathcal{D}_r^jX(s) dW_s^k.
	\end{aligned}
\end{equation}	
Apply the same argument as  estimating $\mathcal{D}X_n(t)$ to above equation, we obtain
\begin{equation}\label{e1.3.10-1}
	\mathbb{E}[\|\mathcal{D}_r^jX(t)\|^p]  \vee \mathbb{E}[\|\mathcal{D}X(t)\|^p_{\mathbb{H}\otimes \mathbb{R}^d}] < C(1+\mathbb{E}[\|\xi\|^p]).
\end{equation}

Now we are ready to prove the existence of density function to solution $X(t)$.

\begin{theorem}\label{L1.3.6}
	Assume (H1)-(H3). Then for any $0< t \leq T$ the law of the solution $X(t)$ to \eqref{e1.3.2} is absolutely continuous with respect to the Lebesgue measure on $\mathbb{R}^d$, i.e.\  the density function of $X(t)$ exists.
\end{theorem}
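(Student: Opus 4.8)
The plan is to verify the two hypotheses of Lemma \ref{L1.2.1} for $G = X(t)$ with $t > 0$. The first hypothesis is already in hand: the analysis preceding the theorem shows $X(t) \in \mathbb{D}^{1,p}(\mathbb{R}^d) \subset \mathbb{D}^{1,p}_{loc}(\mathbb{R}^d)$ together with the a priori bound \eqref{e1.3.10-1}, so it remains to prove that the Malliavin covariance matrix $Q(t) = (Q_{i,l}(t))$, with $Q_{i,l}(t) = \sum_{j=1}^m \int_0^t \mathcal{D}_r^j X^i(t)\, \mathcal{D}_r^j X^l(t)\, dr$, is invertible almost surely.

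First I would exploit the linear structure of \eqref{e1.3.10}. For fixed $r$, the process $t \mapsto \mathcal{D}_r^j X(t)$ solves a homogeneous linear SDE whose coefficients $\bar b(s,\mathcal{L}[X(s)])$ and $\bar\sigma^k(s,\mathcal{L}[X(s)])$ do not depend on $r$, started from $\mathcal{D}_r^j X(r) = \sigma^j(r,X(r),\mathcal{L}[X(r)])$. Introducing the fundamental matrix solution $\Phi(t)$ of
\begin{equation*}
  d\Phi(t) = \bar b(t,\mathcal{L}[X(t)])\Phi(t)\,dt + \sum_{k=1}^m \bar\sigma^k(t,\mathcal{L}[X(t)])\Phi(t)\,dW_t^k, \qquad \Phi(0) = I,
\end{equation*}
which is invertible a.s. (its inverse solving the associated linear SDE obtained by It\^o's formula), uniqueness of solutions yields the representation $\mathcal{D}_r^j X(t) = \Phi(t)\Phi(r)^{-1}\sigma^j(r,X(r),\mathcal{L}[X(r)])$ for $0 \le r \le t$. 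Summing over $j$ and using $\sum_{j=1}^m \sigma^j(\sigma^j)^T = \sigma\sigma^T = A$ then factors the covariance matrix as $Q(t) = \Phi(t)\,C(t)\,\Phi(t)^T$, where $C(t) = \int_0^t \Phi(r)^{-1} A(r,X(r),\mathcal{L}[X(r)]) (\Phi(r)^{-1})^T\, dr$.

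Since $\Phi(t)$ is invertible a.s., it suffices to show the reduced matrix $C(t)$ is a.s.\ positive definite for each $t > 0$. For any unit vector $v \in \mathbb{R}^d$, writing $w_r = (\Phi(r)^{-1})^T v$ and invoking the uniform ellipticity (H3) gives $v^T C(t) v = \int_0^t w_r^T A(r,\cdot) w_r\, dr \ge \lambda \int_0^t \|w_r\|^2\, dr$. Because $\Phi(0) = I$, the factor $\|w_r\|^2$ equals $1$ at $r = 0$, and along almost every path $r \mapsto w_r$ is continuous; a joint-continuity and compactness argument on the unit sphere then produces a (random) $\delta > 0$ with $\|w_r\|^2 \ge 1/2$ for all $r \in [0,\delta]$ and all unit $v$, whence $\inf_{\|v\|=1} v^T C(t)v \ge \tfrac{\lambda}{2}\min(\delta,t) > 0$ a.s. Thus $C(t)$, and therefore $Q(t)$, is invertible a.s., and Lemma \ref{L1.2.1} yields the absolute continuity of the law of $X(t)$.

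I expect the main obstacle to be the rigorous justification of this flow representation and the a.s.\ positivity of $C(t)$: one must confirm that the fundamental solution and its inverse are well defined despite the coefficients $\bar b, \bar\sigma^k$ being merely bounded and progressively measurable rather than smooth, and that the lower bound on $v^T C(t) v$ is uniform over the unit sphere and not merely valid for each fixed $v$. An alternative route that avoids the flow is to estimate $\sup_{\|v\|=1}\mathbb{P}\{v^T Q(t) v \le \epsilon\}$ directly by a small-time/stopping-time argument and then invoke Lemma \ref{L1.2.3}, which additionally delivers $(\det Q(t))^{-1} \in L^p(\Omega)$; this stronger conclusion is not needed for existence but is convenient for the regularity analysis in Section 4.
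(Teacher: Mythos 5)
Your proposal is correct and follows essentially the same route as the paper: the paper's $Y(t)$ and $Z(t)$ are exactly your fundamental solution $\Phi(t)$ and its inverse, leading to the same representation $\mathcal{D}_r^j X(t) = Y(t)Y^{-1}(r)\sigma^j(r)$, the same factorization $Q(t) = Y(t)P(t)Y^T(t)$, and the same pathwise ellipticity bound via (H3). The only cosmetic difference is in the positivity step: the paper bounds $\|(Y^{-1}(r))^T u\| \geq \|u\|/\gamma$ uniformly on $[0,t]$ using a pathwise bound $\gamma$ on $\|Y\|$, whereas you use continuity of $Y^{-1}$ at $r=0$; both give an a.s.\ positive lower bound on $u^T P(t)u$.
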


\begin{proof}
	By virtue of Lemma \ref{L1.2.1}, we only need to investigate the invertibility of the Malliavin covariance matrix $Q(t)$ of $X(t)$. 
	Let $Y(t)$ and $Z(t)$ be solutions of the following two equations, respectively,
	\begin{eqnarray}
		\label{e1.3.11}
		Y(t) &= & I +  \int_{0}^{t} \bar{b}(s, \mathcal{L}[X(s)])Y(s) ds 
		+\sum_{k=1}^{m}\int_{0}^{t}  \bar{\sigma}^k(s, \mathcal{L}[X(s)]) Y(s) dW_s^k,
		\\
		\nonumber
		Z(t) &= & I  - \int_{0}^{t} Z(s)\bar{b}(s, \mathcal{L}[X(s)])  ds + \sum_{k=1}^{m}\int_{0}^{t} Z(s)\bar{\sigma}^k(s, \mathcal{L}[X(s)])\bar{\sigma}^k(s, \mathcal{L}[X(s)]) ds
		\\
		\nonumber 
		& & - \sum_{k=1}^{m} \int_{0}^{t}Z(s) \bar{\sigma}^{k}(s, \mathcal{L}[X(s)]) dW_s^k.
	\end{eqnarray}
	By It\^o formula, for any $t \in [0,T]$, there holds 
	\begin{equation*}
		\begin{aligned}
			Z(t)Y(t) = &\ I - \int_{0}^{t} Z(s)\bar{b}(s,\mathcal{L}[X(s)])Y(s) ds 
			\\ 
			&+\sum_{k=1}^{m} \int_{0}^{t} Z(s)\bar{\sigma}^k(s, \mathcal{L}[X(s)])\bar{\sigma}^k(s, \mathcal{L}[X(s)])Y(s) ds 
			\\
			&-\sum_{k=1}^{m} \int_{0}^{t} Z(s)\bar{\sigma}^k(s, \mathcal{L}[X(s)])Y(s) dW_s^k + \int_{0}^{t} Z(s)\bar{b}(s,\mathcal{L}[X(s)])Y(s) ds 
			\\
			&+ \sum_{k=1}^{m} \int_{0}^{t}Z(s)\bar{\sigma}^k(s, \mathcal{L}[X(s)])Y(s) dW_s^k 
			\\
			&- \sum_{k=1}^{m} \int_{0}^{t} Z(s)\bar{\sigma}^k(s, \mathcal{L}[X(s)])\bar{\sigma}^k(s, \mathcal{L}[X(s)])Y(s) ds\\
			= &\ I.
		\end{aligned}
	\end{equation*}
	Then, $Y(t)$ is invertible for all $t\in [0,T]$ and has continuous sample trajectories a.s. 
	Thus, there exists an event $\Omega_0\subset \Omega$ with $\mathbb{P}(\Omega_0) =1$, such that  for any  $\omega \in \Omega_0$, $\|Y^{\omega}(t)\|$ is continuous in $t\in [0,T]$, hence there exist a constant $\gamma=\gamma(\omega)>0$ such that $\|Y(t)\|\leq\gamma$. 
	By direct computation we can verify that the stochastic process $Y(t)Y^{-1}(r) \sigma^j(r,X(r),\mathcal{L}[X(r)])$ satisfies  \eqref{e1.3.10}, which implies
	\begin{equation*}
		\mathcal{D}_r^jX(t) =  Y(t)Y^{-1}(r) \sigma^j(r,X(r),\mathcal{L}[X(r)]) = Y(t)Y^{-1}(r) \sigma^j(r), \quad \forall 0 \leq r \leq t \leq T.
	\end{equation*}
	Denote by $Y(t) = (Y_1(t), \dots, Y_d(t))^T$, then the Malliavin covariance matrix $Q(t)$ satisfies
	\begin{equation*}
		\begin{aligned}
			Q_{i,l}(t) = &\ ( \mathcal{D}X^i(t), \mathcal{D}X^l(t))_{\mathbb{H}} = \sum_{j=1}^{m}\int_{0}^{T}\mathcal{D}_r^j X^i(t)\mathcal{D}_r^j X^l(t) dr = \sum_{j=1}^{m}\int_{0}^{t}\mathcal{D}_r^j X^i(t)\mathcal{D}_r^j X^l(t) dr
			\\
			= &\ \sum_{j=1}^{m} \int_{0}^{t} Y_{i}(t)Y^{-1}(r)\sigma^j(r) Y_{l}(t)Y^{-1}(r)\sigma^j(r) dr
			\\
			= &\  Y_{i}(t) \big(\int_{0}^{t} Y^{-1}(r)\big(\sum_{j=1}^{m} \sigma^j(r) (\sigma^j)^{T}(r)\big)(Y^{-1})^{T}(r) dr\big) Y^{T}_{l}(t)
			\\
			= &\ Y_i(t) (\int_{0}^{t} Y^{-1}(r) A(r)(Y^{-1})^T(r) dr) Y_l^T(t)
			\\ 
			:= &\  Y_{i}(t) P(t) Y_{l}^{T}(t).
		\end{aligned}
	\end{equation*}
	
	For any $ u \in \mathbb{R}^d$, let $v(r) = (Y^{-1})^T(r) u$ and then we have
	\begin{equation*}
		\|u\| = \|Y^T(r)v(r)\| \leq \|Y^T(r)\| \|v(r)\| \leq \gamma \|v(r)\|.
	\end{equation*}
	Thus 
	\begin{equation*}
		\|v(r)\| \geq \frac{\|u\|}{\gamma}.
	\end{equation*}
	Then we have
	\begin{equation*}\label{e1.3.12}
		\begin{aligned}
			u^T P(t) u &= \int_{0}^{t} u^T Y^{-1}(r)A(r)(Y^{-1})^T(r) u dr
			= \int_{0}^{t} v^T(r)A(r)v(r) dr \\
			&\geq \int_{0}^{t} \lambda\| v(r)\|^2 dr
			\geq \frac{t\lambda}{\gamma^2}\|u\|^2,
		\end{aligned}
	\end{equation*}
	which  implies  $Q(t)$ is invertible  for all $\omega \in \Omega_0$ and $t\in (0,T]$. According to Lemma \ref{L1.2.1}, the density function of $X(t)$ exists for all $t \in (0,T]$.
\end{proof}

\section{Smoothness of density function}
In this section, we will investigate the finite order regularity of the density function $p(t,x)$ of the solution $X(t)$ in terms of certain smoothness of the coefficients $b$ and $\sigma$. For simplifying notations, we use $D^nb(t)$ and $D^n\sigma(t)$ $(n\geq 0)$ to denote the $n$-th order Fr\'echet derivatives of $b(t,x,\mu)$ and $\sigma(t,x,\mu)$ with respect to $x$, respectively. Here we stipulate $D^0b(t) = b(t)$ and $D^0\sigma(t) = \sigma(t)$. Instead of (H1) and (H2), we assume
\begin{itemize}
	\item[(H1$^{\prime}$)]$\xi \in L_d^{\infty -}(\Omega)$ is independent of $W_t$ for $t \in (0,T]$.
	\item[(H2$^{\prime}$)]
	There exists an integer $N \geq 0$ such that $D^nb(t,x,\mu)$ and $ D^n\sigma(t,x,\mu)$ are continuous for  $0 \leq n \leq N+2$ and bounded for  $1 \leq n \leq N+2$. Moreover, there exists a constant $L > 0$ such that  for all $n\geq 0$, $t \in  [0,T]$, $x, x' \in \mathbb{R}^d$ and $\mu, \mu' \in \mathcal{P}_2(\mathbb{R}^d)$ there hold 
	\begin{equation*}
		\begin{aligned}
			\| D^{n}b(t,x,\mu) - D^{n}b(t, x', \mu')\|_{({\mathbb{R}^d})^{\otimes (n+1)}} \leq L(\|x-x'\| + W_2(\mu,\mu')) ,\\
			\| D^{n}\sigma(t,x,\mu) - D^{n} \sigma(t, x', \mu') \|_{({\mathbb{R}^d})^{\otimes (n+1)}\otimes \mathbb{R}^m} \leq L(\|x- x'\| + W_2(\mu,\mu')).
		\end{aligned}
	\end{equation*}
\end{itemize}

We start with investigating the regularity of the  Malliavin covariance matrix $Q$.

\begin{lemma}\label{L1.4.1}
	Assume (H1$^{\prime}$), (H2$^{\prime}$) and (H3). Then for any $0 < t \leq T$ the  Malliavin covariance matrix $Q(t)$ of the solution $X(t)$ to \eqref{e1.3.1} satisfies $(detQ(t))^{-1} \in L^{\infty -}(\Omega)$.
\end{lemma}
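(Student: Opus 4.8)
The plan is to apply Lemma~\ref{L1.2.3} to the Malliavin covariance matrix $Q(t)$, which is symmetric and nonnegative definite by construction. The preliminary hypothesis of Lemma~\ref{L1.2.3}, that the entries $Q_{i,l}(t)=(\mathcal{D}X^i(t),\mathcal{D}X^l(t))_{\mathbb{H}}$ possess moments of all orders, follows at once from the a priori bound \eqref{e1.3.10-1}: under (H1$^\prime$) the initial datum lies in $L_d^{\infty -}(\Omega)$, so $\mathbb{E}[\|\mathcal{D}X(t)\|^p_{\mathbb{H}\otimes\mathbb{R}^d}]<\infty$ for every $p\geq 2$ and hence $\mathbb{E}[|Q_{i,l}(t)|^p]<\infty$ by Cauchy--Schwarz. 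It then remains to verify the small-ball estimate: for every $p\geq 2$ there is $\epsilon_0(p)>0$ such that $\sup_{\|v\|=1}\mathbb{P}\{v^TQ(t)v\leq\epsilon\}\leq\epsilon^p$ for all $\epsilon\leq\epsilon_0(p)$.

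To obtain the lower bound I would exploit the nondegeneracy of $\sigma$ near the diagonal $r=t$. From the expression for $Q_{i,l}(t)$ in the proof of Theorem~\ref{L1.3.6} and the fact that $\mathcal{D}_r^jX(t)=0$ for $r>t$, one has, for $\|v\|=1$, $v^TQ(t)v=\sum_{j=1}^m\int_0^t|v^T\mathcal{D}_r^jX(t)|^2\,dr$. Using \eqref{e1.3.10}, for $r\leq t$ we may write $\mathcal{D}_r^jX(t)=\sigma^j(r)+\rho_r^j$ with the remainder
\begin{equation*}
  \rho_r^j=\int_r^t\bar b(s,\mathcal{L}[X(s)])\mathcal{D}_r^jX(s)\,ds+\sum_{k=1}^m\int_r^t\bar\sigma^k(s,\mathcal{L}[X(s)])\mathcal{D}_r^jX(s)\,dW_s^k,
\end{equation*}
which, crucially, does not depend on $v$. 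Applying the elementary inequality $\|x+y\|^2\geq\tfrac12\|x\|^2-\|y\|^2$ componentwise and summing over $j$ gives $\sum_{j}|v^T\mathcal{D}_r^jX(t)|^2\geq\tfrac12\,v^TA(r)v-\sum_j\|\rho_r^j\|^2\geq\tfrac{\lambda}{2}-\sum_j\|\rho_r^j\|^2$, where I used (H3) and $\|v\|=1$. Restricting the $r$-integral to $[t-\delta,t]$ for $\delta\in(0,t]$ therefore yields the uniform bound $v^TQ(t)v\geq\tfrac{\lambda\delta}{2}-R_\delta$ with $R_\delta:=\sum_{j=1}^m\int_{t-\delta}^t\|\rho_r^j\|^2\,dr$ independent of $v$.

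The final step is to show $R_\delta$ is small in $L^q$. For $r\in[t-\delta,t]$ the interval $[r,t]$ has length at most $\delta$, so by the Burkholder--Davis--Gundy inequality (Lemma~\ref{L1.2.4}), H\"older's inequality, the uniform bounds $\|\bar b\|,\|\bar\sigma^k\|\leq L$ and the moment estimate \eqref{e1.3.10-1} one gets $\mathbb{E}[\|\rho_r^j\|^{2q}]\leq C(\delta^{2q}+\delta^{q})\leq C\delta^{q}$ for $\delta\leq 1$, and then $\mathbb{E}[R_\delta^q]\leq(m\delta)^{q-1}\sum_j\int_{t-\delta}^t\mathbb{E}[\|\rho_r^j\|^{2q}]\,dr\leq C\delta^{2q}$ by Jensen's inequality. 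Choosing $\delta=4\epsilon/\lambda$ (admissible once $\epsilon\leq(\lambda t/4)\wedge(\lambda/4)$) makes $\tfrac{\lambda\delta}{2}-\epsilon=\epsilon$, so Chebyshev's inequality gives $\sup_{\|v\|=1}\mathbb{P}\{v^TQ(t)v\leq\epsilon\}\leq\mathbb{P}\{R_\delta\geq\epsilon\}\leq\epsilon^{-q}\mathbb{E}[R_\delta^q]\leq C(q)\epsilon^{q}$. Taking $q=p+1$ and shrinking $\epsilon_0(p)$ to absorb the constant $C(p+1)$ produces the required bound $\leq\epsilon^p$, whereupon Lemma~\ref{L1.2.3} delivers $(\det Q(t))^{-1}\in L^p(\Omega)$ for every $p$, i.e.\ $(\det Q(t))^{-1}\in L^{\infty -}(\Omega)$. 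I expect the main obstacle to be the $v$-uniform lower bound in the second paragraph: the argument hinges on isolating the genuinely nondegenerate contribution $\sigma^j(r)$ near $r=t$, whose energy $v^TA(r)v$ is bounded below by $\lambda$ uniformly in $v$ by (H3), while showing that the $v$-independent correction $R_\delta$ arising from the dynamics of $\mathcal{D}_r^jX$ is of strictly higher order in $\delta$.
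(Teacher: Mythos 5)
Your proof is correct, but it takes a genuinely different route from the paper's. The paper reuses the machinery of Theorem~\ref{L1.3.6}: writing $\bar b = Db$ and $\bar\sigma^k = D\sigma^k$, it factorizes $Q(t) = Y(t)\bigl(\int_0^t Y^{-1}(r)A(r)(Y^{-1}(r))^T\,dr\bigr)Y^T(t)$ through the first-variation process $Y$ of \eqref{e1.3.11}, bounds $\|Y(t)\|\vee\|Y^{-1}(t)\|\leq\gamma(\omega)$ pathwise by continuity of trajectories, and concludes $u^TQ(t)u\geq t\lambda/\gamma^4$ for all unit $u$, so that the small-ball probability vanishes below the threshold $t\lambda/(2\gamma^4)$. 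You instead work directly on $v^TQ(t)v=\sum_j\int_0^t|v^T\mathcal{D}_r^jX(t)|^2\,dr$, isolate the nondegenerate initial term $\sigma^j(r)$ of the SDE \eqref{e1.3.10} near the diagonal $r\approx t$, and beat the $v$-independent remainder $R_\delta$ by the moment bound $\mathbb{E}[R_\delta^q]\leq C\delta^{2q}$ plus Chebyshev --- the classical localization argument for uniformly elliptic diffusions. The comparison is instructive: the paper's argument is shorter given Section 3, but its threshold $\epsilon_0=t\lambda/(2\gamma^4)$ is \emph{random}, whereas Lemma~\ref{L1.2.3} requires a deterministic $\epsilon_0(p)$; to make that verification literally fit the lemma one would need, in addition, moment bounds on $\gamma$ (i.e.\ on $\sup_t\|Y(t)\|$ and $\sup_t\|Y^{-1}(t)\|$) followed by a Chebyshev estimate anyway. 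Your argument produces a deterministic $\epsilon_0(p)$ outright, needs neither the invertibility of $Y$ nor its pathwise bounds, and is in this respect the more watertight of the two; its only cost is the extra bookkeeping in the remainder estimate, together with the restriction $\epsilon\leq(\lambda t/4)\wedge(\lambda/4)$, whose degeneration as $t\to 0$ mirrors exactly the $t\lambda/\gamma^4$ bound in the paper.
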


\begin{proof}
	Obviously, (H1$^{\prime}$) and (H2$^{\prime}$) imply (H1) and (H2), respectively. By the analysis in previous section, we have $X(t) \in \mathbb{D}^{1,\infty} (\mathbb{R}^d)$.  Since the functions $b$ and $\sigma$ are continuously differentiable with respect to $x$, we have $\bar{b} = Db$ and $\bar{\sigma}^k = D\sigma^k$ in \eqref{e1.3.11}. By $Y(t)$ we denote the solution to \eqref{e1.3.11}. Then,  $Y(t)$ is invertible for all $t\in [0,T]$ and has continuous sample trajectories a.s. Thus, there exists an event $\Omega_0\subset \Omega$ with $\mathbb{P}(\Omega_0) =1$, such that  for any  $\omega \in \Omega_0$, $\|Y^{\omega}(t)\|$ and $\|(Y^{\omega})^{-1}(t)\|$ are continuous on $[0,T]$, hence  there exist a constant $\gamma=\gamma(\omega) > 0$ such that 
	\begin{equation}\label{e1.4.6}
		\|Y(t)\| \vee \|Y^{-1}(t)\| \leq\gamma.
	\end{equation}
	Due to \eqref{e1.3.10-1}, for any $p \geq 1$ and  $1\leq i,l \leq d$, there holds 
	\begin{equation}\label{e1.4.6-1}
		\begin{aligned}
			\mathbb{E}[|Q_{i,l}(t)|^p] =&\ \mathbb{E}[|(\mathcal{D}X^i(t), \mathcal{D}X^l(t))_{\mathbb{H}}|^p] 
			\leq \mathbb{E}[\|\mathcal{D}X^i(t)\|^p_{\mathbb{H}} \|\mathcal{D}X^l(t)\|_{\mathbb{H}}^p]\\
			\leq&\ \frac12\mathbb{E}[\|\mathcal{D}X^i(t)\|_{\mathbb{H}}^{2p}] +\frac12\mathbb{E}[\|\mathcal{D}X^l(t)\|_{\mathbb{H}}^{2p}]\leq  C(1+\mathbb{E}[\|\xi\|^{2p}]),
		\end{aligned}
	\end{equation} 
	where $C = C(T,p) > 0$ is a constant.
	Thus $Q_{i,l}$ has finite moments of all orders. Similar to the analysis in proving Theorem \ref{L1.3.6}, we have $Q(t)$ is invertible a.s.\ and satisfies
	\begin{equation*}
		Q(t) = Y(t)(\int_{0}^{t} Y^{-1}(r) A(r)(Y^{-1}(r))^T dr) Y^T(t).
	\end{equation*}  
	For any $u \in \mathbb{R}^d$ with $\| u \| = 1$ and $0 < r \leq t \leq T$, define $v(r, t) = (Y^{-1}(r))^T Y^T(t) u$. From \eqref{e1.4.6}, it follows that 
	\begin{equation*}
		1 = \|u\| = \| (Y^{-1}(t))^T Y^T(r) v(t ,r)\| \leq \gamma^2 \|v(r,t)\|,
	\end{equation*}
	which implies	$\|v(r,t) \| \geq \frac{1}{\gamma^2}$.
	Then, there holds
	\begin{equation*}
		\begin{aligned}
			u^TQ(t)u &= u^TY(t)\big(\int_{0}^{t} Y^{-1}(r)A(r)(Y^{-1}(r))^T dr\big) Y^T(t)u
			= \int_{0}^{t} v^T(r,t) A(r) v(r,t) dr \\
			&\geq \int_{0}^{t} \lambda\| v(r, t)\|^2 dr
			\geq \frac{t\lambda}{\gamma^4}.
		\end{aligned}
	\end{equation*}
	Taking $\epsilon_0 = \frac{t\lambda}{2\gamma^4}$, then for any $0 < \epsilon \leq \epsilon_0$ we obtain that
	\begin{equation}\label{e1.4.7}
		\mathbb{P}\{ u^TQ(t)u \leq \epsilon \} = 0 < \epsilon^p.
	\end{equation} 
	Noticing the estimates  \eqref{e1.4.6-1} and \eqref{e1.4.7},	from  Lemma \ref{L1.2.3}, it follows that  $(detQ(t))^{-1} \in L^p(\Omega)$ for all $p \geq 2$. Thus the proof is completed.
\end{proof}  

The next theorem concerns with the first order regularity of the density function $p(t,x)$ with respect to $x$.

\begin{theorem}\label{L1.4.2}
	Assume (H1$^{\prime}$), (H2$^{\prime}$) and (H3) hold for $N=1$. Let $p(t,x)$  be the density function of the solution $X(t)$ to \eqref{e1.3.1}. Then $p(t,x)$ together with its first order Fr\'echet derivative with respect to $x$ are continuous and bounded.
\end{theorem}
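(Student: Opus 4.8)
The plan is to invoke Lemma \ref{L1.2.2} with $N=1$, whose hypotheses require two ingredients: sufficient Malliavin–Sobolev regularity of $X(t)$, namely $X(t) \in \mathbb{D}^{N+2,4dMp}(\mathbb{R}^d) = \mathbb{D}^{3,4dMp}(\mathbb{R}^d)$, and integrability of the inverse determinant of the Malliavin covariance matrix, namely $\triangle^{-1} = (\det Q(t))^{-1} \in L^{2Mp}(\Omega)$. The second ingredient is already delivered for free by Lemma \ref{L1.4.1}, which gives $(\det Q(t))^{-1} \in L^{\infty-}(\Omega)$ and hence membership in every $L^{q}(\Omega)$; so the real work is establishing that $X(t) \in \mathbb{D}^{3,q}(\mathbb{R}^d)$ for every finite $q$, i.e.\ $X(t)\in\mathbb{D}^{3,\infty}(\mathbb{R}^d)$.

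To obtain the higher-order Malliavin regularity, I would iterate the differentiation procedure already carried out in Section 3 for the first derivative. Since (H2$^\prime$) guarantees that $b$ and $\sigma$ are $(N+2)=3$ times continuously differentiable in $x$ with bounded derivatives of order $1$ through $3$, I can legitimately differentiate the SDE \eqref{e1.3.10} once more to produce a linear SDE satisfied by the second Malliavin derivative $\mathcal{D}^2_{r_1,r_2}X(t)$, and once again for the third derivative $\mathcal{D}^3_{r_1,r_2,r_3}X(t)$. Each resulting equation is linear in the highest-order derivative, with bounded coefficients (the first-order Fr\'echet derivatives $Db$, $D\sigma^k$) and inhomogeneous terms built from lower-order Malliavin derivatives and from $D^nb$, $D^n\sigma^k$ with $n\le 3$; by induction on the order $n$, those lower-order derivatives already possess uniform $L^p$ bounds for every $p$. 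I would then apply the same BDG--H\"older--Gronwall machinery used to derive \eqref{e1.3.10-1}, together with Lemma \ref{L1.2.1-1} to pass from the pointwise estimates $\mathbb{E}[\|\mathcal{D}^{j_1,\dots,j_n}_{r_1,\dots,r_n}X(t)\|^p]\le C$ to the $\mathbb{H}^{\otimes n}\otimes\mathbb{R}^d$-norm estimate $\mathbb{E}[\|\mathcal{D}^nX(t)\|^p_{\mathbb{H}^{\otimes n}\otimes\mathbb{R}^d}]\le C$ for $n=2,3$. This yields $X(t)\in\mathbb{D}^{3,p}(\mathbb{R}^d)$ for every $p\ge 2$.

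With both hypotheses verified, the conclusion $p(t,x)\in C_b^{1}(\mathbb{R}^d)$, meaning $p(t,x)$ and its first-order Fr\'echet derivative in $x$ are continuous and bounded, follows directly from Lemma \ref{L1.2.2} applied with the specific constant $M = \frac{(N+1)^2}{2}+\frac{3(N+1)}{2} = \frac{4}{2}+\frac{6}{2}=5$ at $N=1$, and with $p$ chosen larger than $d$; the quantitative bound on $\|p(t,\cdot)\|_{C_b^1(\mathbb{R}^d)}$ is then inherited from the lemma's displayed inequality once the finiteness of $\|\mathcal{D}X(t)\|_{2,4dMp,\mathbb{R}^d}$ and $\|\triangle^{-1}\|_{L^{2Mp}(\Omega)}$ is known.

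The main obstacle I anticipate is the careful bookkeeping required when differentiating the implicitly defined coefficients $\bar b(s,\mathcal{L}[X(s)])$ and $\bar\sigma^k(s,\mathcal{L}[X(s)])$ in \eqref{e1.3.10}, since these arose in Section 3 merely as bounded random matrices produced by the Lipschitz chain rule (Lemma \ref{L1.2.2-1}) rather than as genuine derivatives. Under (H2$^\prime$), however, $b$ and $\sigma$ are truly differentiable in $x$, so I expect these objects to coincide with the honest Fr\'echet derivatives $Db$ and $D\sigma^k$ evaluated along the solution (as already noted in the proof of Lemma \ref{L1.4.1}), which legitimizes a second and third differentiation. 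Verifying rigorously that the higher Malliavin derivatives exist and satisfy the claimed linear SDEs — rather than merely estimating formal candidates — together with propagating the uniform moment bounds through the inductive step, will be the technically delicate part; the subsequent estimates are routine applications of the tools assembled in Section 2.
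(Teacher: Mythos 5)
Your proposal is correct and follows essentially the same route as the paper: reduce via Lemma \ref{L1.2.2} (with $N=1$, $M=5$) and Lemma \ref{L1.4.1} to showing $X(t)\in\mathbb{D}^{3,\infty}(\mathbb{R}^d)$, identify $\bar b=Db$, $\bar\sigma^k=D\sigma^k$ under (H2$^\prime$), differentiate \eqref{e1.3.10} twice more to obtain linear SDEs for $\mathcal{D}^2X(t)$ and $\mathcal{D}^3X(t)$, and close the moment estimates with BDG, H\"older, Gronwall and Lemma \ref{L1.2.1-1}. The technical subtlety you flag (that the Section 3 coefficients arose from the Lipschitz chain rule and must be upgraded to genuine Fr\'echet derivatives) is exactly the point the paper handles in Lemma \ref{L1.4.1}, so no gap remains.
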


\begin{proof}
	According to Lemmas \ref{L1.2.2} and \ref{L1.4.1}, we only need to prove $X(t) \in \mathbb{D}^{3, \infty}(\mathbb{R}^d)$. From the estimate \eqref{e1.3.10-1}, it follows that $X(t) \in \mathbb{D}^{1, \infty}(\mathbb{R}^d)$. 
	
	Now, we estimate the second order  Malliavin derivative of $X(t)$. By virtue of \eqref{e1.3.10} and the smoothness of $b$ and $\sigma$,  for any integer $1 \leq j_1 \leq m$, the $j_1$-th component $\mathcal{D}_{r_1}^{j_1}X(t) = \mathcal{D}_{r_1}^{j_1}X(t)\mathbbm{1}_{\{r_1 \leq t\}}$ $(r_1 \in [0,T])$  of the Malliavin derivative $\mathcal{D}X(t)$ satisfies an SDE for $0 \leq r_1 \leq t \leq T$ 
	\begin{equation*}\label{e1.4.1}
		\mathcal{D}_{r_1}^{j_1}X(t) =  \int_{r_1}^{t} Db(s)\mathcal{D}_{r_1}^{j_1}X(s) ds  + \sigma^{j_1}(r_1) + \sum_{k=1}^{m}\int_{r_1}^{t} D\sigma^k(s)\mathcal{D}_{r_1}^{j_1}X(s) dW_s^k.
	\end{equation*}
	Taking Malliavin derivative on both sides of above equation,  we have that for any $1 \leq j_1, j_2 \leq m$ the $(j_1, j_2)$-th component $\mathcal{D}_{r_1,r_2}^{j_1,j_2}X(t) = \mathcal{D}_{r_1,r_2}^{j_1,j_2}X(t)\mathbbm{1}_{\{r_1,r_2 \leq t\}}$ $(r_1, r_2 \in [0,T])$ of the second order Malliavin derivative $\mathcal{D}^2X(t)$ satisfies an SDE for all  $ 0 \leq r_1, r_2 \leq t \leq T$  
	\begin{equation}\label{e1.4.2}
		\begin{aligned}
			&\mathcal{D}_{r_1,r_2}^{j_1,j_2}X(t)= \ \int_{r_1 \vee r_2}^{t}\big(D^2b(s)(\mathcal{D}_{r_1}^{j_1}X(s),\mathcal{D}_{r_2}^{j_2}X(s))
			+Db(s)\mathcal{D}_{r_1,r_2}^{j_1,j_2}X(s)\big) ds \\
			&\ \quad +D\sigma^{j_1}(r_1)\mathcal{D}_{r_2}^{j_2}X(r_1) + 
			D\sigma^{j_2}(r_2)\mathcal{D}_{r_1}^{j_1}X(r_2)\\
			&\ \quad + \sum_{k=1}^{m} \int_{r_1 \vee r_2}^{t} \big(D^2\sigma^k(s)(\mathcal{D}_{r_1}^{j_1}X(s),\mathcal{D}_{r_2}^{j_2}X(s))
			+D\sigma^k(s)\mathcal{D}_{r_1,r_2}^{j_1,j_2}X(s) \big) dW_s^k.
		\end{aligned}
	\end{equation}	
	In order to simplify the proof, we introduce some abbreviations.
	For any   $1\leq k, j_1, j_2 \leq m$ and $t, r_1, r_2 \in [0,T]$, let
	\begin{equation*}\label{e1.4.2-1}
		\begin{aligned}
			\Gamma^k_{j_1}(t, r_1)\! &\ := \mathcal{D}^{j_1}_{r_1}\sigma^k(t) = D\sigma^k(t)\mathcal{D}_{r_1}^{j_1}X(t),
			\\
			B_{j_1,j_2}(t, r_1, r_2)\! &\ :=  \mathcal{D}_{r_1, r_2}^{j_1, j_2} b(t)=  D^2b(t)(\mathcal{D}_{r_1}^{j_1}X(t)  \mathcal{D}_{r_2}^{j_2}X(t) ) + Db(t)\mathcal{D}_{r_1,r_2}^{j_1,j_2}X(t),
			\\
			\Gamma^{k}_{j_1,j_2}(t, r_1, r_2)\! &\ := \mathcal{D}_{r_1, r_2}^{j_1, j_2} \sigma^{k}(t)=  D^2\sigma^{k}(t)(\mathcal{D}_{r_1}^{j_1}X(t) , \mathcal{D}_{r_2}^{j_2}X(t) ) +
			D\sigma^{k}(t)\mathcal{D}_{r_1,r_2}^{j_1,j_2}X(t). 
		\end{aligned}
	\end{equation*}	
	Then,  \eqref{e1.4.2} can be rewritten as  
	\begin{equation*}\label{e1.4.2-2}
		\begin{aligned}
			\mathcal{D}_{r_1,r_2}^{j_1,j_2}X(t)= &\ \int_{r_1 \vee r_2}^{t} B_{j_1,j_2}(s,r_1,r_2) ds  +\Gamma^{j_1}_{j_2}(r_1, r_2) +  \Gamma^{j_2}_{j_1}(r_2, r_1)\\
			&\ + \sum_{k=1}^{m} \int_{r_1 \vee r_2}^{t} \Gamma^{k}_{j_1,j_2}(s, r_1, r_2) dW_s^k.
		\end{aligned}
	\end{equation*}
	Applying H\"older inequality, \eqref{e1.3.10-1} and BDG inequality, we have
	\begin{equation}\label{e1.4.2-3}
		\begin{aligned}
			&\ \mathbb{E}[\|\mathcal{D}_{r_1,r_2}^{j_1,j_2}X(t)\|^p] \\
			\leq&\  (m+3)^{p-1}\mathbb{E}[\|\!\!\int_{r_1 \vee r_2}^{t} \!\!\!\!B_{j_1,j_2}(s,r_1,r_2) ds\|^p + \|\Gamma^{j_1}_{j_2}(r_1, r_2) \|^p  + \|\Gamma^{j_2}_{j_1}(r_2, r_1)\|^p]\\ 
			&\ + (m+3)^{p-1}\sum_{k=1}^{m} \mathbb{E}[\sup_{r_1 \vee r_2 \leq \tau \leq t}\|\int_{r_1 \vee r_2}^{\tau} \Gamma^{k}_{j_1,j_2}(s, r_1, r_2) dW_{s}^k \|^p] \\
			\leq &\ CT^{p-1}\mathbb{E}[\int_{r_1 \vee r_2}^{t} \|B_{j_1,j_2}(s,r_1,r_2)\|^p ds] + 2C(1+\mathbb{E}[\|\xi\|^p])\\
			&\ + C\sum_{k=1}^{m} C(p)\mathbb{E}[(\int_{r_1 \vee r_2}^{t} \|\Gamma^{k}_{j_1,j_2}(s, r_1, r_2)\|^2 ds)^{\frac{p}{2}}]\\
			\leq &\  C\int_{r_1 \vee r_2}^{t} \mathbb{E}[\|B_{j_1,j_2}(s,r_1,r_2)\|^p] ds + C(1+\mathbb{E}[\|\xi\|^p])\\
			&\ + C\sum_{k=1}^{m}T^{\frac{p-2}{2}}\int_{r_1 \vee r_2}^{t} \mathbb{E}[\|\Gamma^{k}_{j_1,j_2}(s, r_1, r_2)\|^p] ds.
		\end{aligned}
	\end{equation}
	
	Notice that
	\begin{equation}\label{e1.4.2-4}
		\begin{aligned}
			&\ \mathbb{E}[\|B_{j_1,j_2}(s,r_1,r_2)\|^p] \\
			\leq &\ 2^{p-1}\mathbb{E}[\|D^2b(s)(\mathcal{D}_{r_1}^{j_1}X(s), \mathcal{D}_{r_2}^{j_2}X(s))\|^p + \|Db(s)\mathcal{D}_{r_1,r_2}^{j_1,j_2}X(s)\|^p] \\
			\leq &\ CM^p\mathbb{E}[\|\mathcal{D}_{r_1}^{j_1}X(s)\|^p \|\mathcal{D}_{r_2}^{j_2}X(s)\|^p] + CM^p\mathbb{E}[\|\mathcal{D}_{r_1,r_2}^{j_1,j_2}X(s)\|^p] \\
			\leq &\ \frac{1}{2}C\mathbb{E}[\|\mathcal{D}_{r_1}^{j_1}X(s)\|^{2p}] + \frac{1}{2}C\mathbb{E}[\|\mathcal{D}_{r_2}^{j_2}X(s)\|^{2p}] + C\mathbb{E}[\|\mathcal{D}_{r_1,r_2}^{j_1,j_2}X(s)\|^p]\\
			\leq &\ C(1+\mathbb{E}[\|\xi\|^{2p}]) + C\mathbb{E}[\|\mathcal{D}_{r_1,r_2}^{j_1,j_2}X(s)\|^p].
		\end{aligned}
	\end{equation}
	In a similar way, we obtain
	\begin{equation}\label{e1.4.2-5}
		\mathbb{E}[\|\Gamma^k_{j_1,j_2}(s,r_1,r_2)\|^p] \leq C(1+\mathbb{E}[\|\xi\|^{2p}]) + C\mathbb{E}[\|\mathcal{D}_{r_1,r_2}^{j_1,j_2}X(s)\|^p].
	\end{equation}
	Substituting  \eqref{e1.4.2-4} and \eqref{e1.4.2-5} into \eqref{e1.4.2-3}, and then applying Gronwall's inequality yields
	\begin{equation*}\label{e1.4.2-6}
		\mathbb{E}[\|\mathcal{D}_{r_1,r_2}^{j_1,j_2}X(t)\|]^p \leq C(1+\mathbb{E}[\|\xi\|^{p}]+ \mathbb{E}[\|\xi\|^{2p}]),
	\end{equation*} 
	where $C = C(T,p)$ is a constant.
	Therefore, from Lemma \ref{L1.2.1-1}, it follows that
	\begin{equation*}
		\mathbb{E}[\|\mathcal{D}^2X(t)\|^p_{\mathbb{H}^{\otimes 2}\otimes \mathbb{R}^d}] \leq C,\quad \forall t \in [0,T],
	\end{equation*} 
	which implies $X(t) \in \mathbb{D}^{2, \infty}(\mathbb{R}^d)$. 
	
	Finally, we estimate the third order Malliavin derivative $\mathcal{D}^3X(t)$. In a similar way, we define some notations  for  $1\leq k, j_1, j_2, j_3 \leq m$ and $t, r_1, r_2, r_3 \in [0,T]$ 
	\begin{equation*}
		\begin{aligned}
			&\ B_{j_1,j_2,j_3} (t, r_1, r_2, r_3) := \mathcal{D}_{r_1, r_2, r_3}^{j_1, j_2, j_3}b(t) \\
			&\ \qquad =  D^3b(t)(\mathcal{D}_{r_1}^{j_1}X(t),\mathcal{D}_{r_2}^{j_2}X(t), \mathcal{D}_{r_3}^{j_3}X(t)) \\
			&\ \qquad +D^2b(t)(\mathcal{D}_{r_1, r_3}^{j_1, j_3}X(t),\mathcal{D}_{r_2}^{j_2}X(t)) 
			+D^2b(t)(\mathcal{D}_{r_1}^{j_1}X(t),\mathcal{D}_{r_2, r_3}^{j_2, j_3}X(t))\\
			&\ \qquad +D^2b(t)(\mathcal{D}_{r_1, r_2}^{j_1, j_2}X(t),\mathcal{D}_{r_3}^{j_3}X(t))
			+Db(t)\mathcal{D}_{r_1, r_2, r_3}^{j_1, j_2, j_3}X(t)\\
			&\ \Gamma^{k}_{j_1,j_2,j_3} (t, r_1, r_2, r_3) :=  \mathcal{D}_{r_1, r_2, r_3}^{j_1, j_2, j_3}\sigma^k(t)\\
			&\	\qquad = D^3\sigma^k(t)(\mathcal{D}_{r_1}^{j_1}X(t),\mathcal{D}_{r_2}^{j_2}X(t), \mathcal{D}_{r_3}^{j_3}X(t))\\
			&\ \qquad +D^2\sigma^k(t)(\mathcal{D}_{r_1, r_3}^{j_1, j_3}X(t),\mathcal{D}_{r_2}^{j_2}X(t))
			+D^2\sigma^k(t)(\mathcal{D}_{r_1}^{j_1}X(t),\mathcal{D}_{r_2, r_3}^{j_2, j_3}X(t))\\
			&\ \qquad +D^2\sigma^k(t)(\mathcal{D}_{r_1, r_2}^{j_1, j_2}X(t),\mathcal{D}_{r_3}^{j_3}X(t))
			+D\sigma^k(t)\mathcal{D}_{r_1, r_2, r_3}^{j_1, j_2, j_3}X(t). 	
		\end{aligned}
	\end{equation*}	
	
	Noticing that $\mathcal{D}_{r_1, r_2, r_3}^{j_1,j_2,j_3}X(t) = \mathcal{D}_{r_1, r_2, r_3}^{j_1,j_2,j_3}X(t)\mathbbm{1}_{\{r_1,r_2,r_3 \leq t\}}$, and then taking Malliavin derivative on both sides of \eqref{e1.4.2} yields  for $0 \leq r_1, r_2, r_3 \leq t \leq T$ 
	\begin{equation*}
		\begin{aligned}
			\mathcal{D}_{r_1, r_2, r_3}^{j_1,j_2,j_3}X(t)
			=&\ \int_{r_1 \vee r_2 \vee r_3}^{t} B_{j_1,j_2,j_3} (s, r_1, r_2, r_3) ds + 
			\Gamma^{j_1}_{j_2,j_3}(r_1, r_2, r_3) + \Gamma^{j_2}_{j_1,j_3}(r_2, r_1, r_3) \\
			&\  + \Gamma^{j_3}_{j_1,j_2}(r_3, r_1, r_2) + \sum_{k=1}^{m} \int_{r_1 \vee r_2 \vee r_3}^{t} \Gamma^{k}_{j_1,j_2,j_3} (s, r_1, r_2, r_3) dW_s^k.
		\end{aligned}
	\end{equation*}
	Repeating  the procedure for estimating $\mathcal{D}^2X(t)$,  we can prove  $X(t) \in \mathbb{D}^{3,\infty}(\mathbb{R}^d)$ for all $t \in [0,T]$. Thus, the proof is completed. 
\end{proof}

In order to study higher order regularity of density function $p(t,x)$ with respect to $x$, we need to estimate higher order Malliavin derivative of $X(t)$. Before this, we introduce some abbreviations to make statement concise. For any integers $n \geq 1$ and $1\leq \alpha \leq n$, define a subset $K = \{\eta_1 < \cdots < \eta_{\alpha}\} \subset \{1, \dots, n\}$.  For any integer $1\leq \theta \leq n$, let  $K_1$, $\cdots$, $K_{\theta}$ be a family of disjoint such sets so that they form a partition of $\{1, \dots, n\}$. Denote by  $\Lambda$  the set of all such partitions of $\{1, \dots, n\}$. Let $j(K) = (j_{\eta_1}, \dots, j_{\eta_{\alpha}})$ and $r(K) = (r_{\eta_1}, \dots, r_{\eta_{\alpha}})$ be two multi-indices with $j_{\eta_{i}} \in \{1,\dots,m\}$ and $r_{\eta_i} \in [0,T]$. Define
\begin{equation*}
	\begin{aligned}
		B_{j_1,\dots,j_n}(t, r_1,\dots, r_n) := &\ \mathcal{D}_{r_1,\dots, r_n}^{j_1,\dots,j_n}b(t) = \sum_{\Lambda} D^{\theta}b(t)(\mathcal{D}_{r(K_1)}^{j(K_1)}X(t), \cdots, \mathcal{D}_{r(K_{\theta})}^{j(K_{\theta})}X(t)) \\
		= &\ R_{j_1,\dots,j_{n}}(t, r_1,\dots, r_{n}) + Db(t)\mathcal{D}_{r_1, \dots, r_{n}}^{j_1, \dots, j_{n}}X(t), 
	\end{aligned}
\end{equation*}
where 
\begin{equation*}
	R_{j_1,\dots,j_{n}}(t, r_1,\dots, r_{n}) = \sum_{\Lambda, \theta \not= 1} D^{\theta}b(t)(\mathcal{D}_{r(K_1)}^{j(K_1)}X(s), \cdots, \mathcal{D}_{r(K_{\theta})}^{j(K_{\theta})}X(t)). 
\end{equation*}
Similarly, for  $k = 1,\dots,m$ define 
\begin{equation*}
	\begin{aligned}
		\Gamma_{j_1,\dots,j_n}^k(t, r_1,\dots, r_n) := &\ \mathcal{D}_{r_1,\dots, r_n}^{j_1,\dots,j_n}\sigma^k(t)
		= P_{j_1,\dots,j_{n}}^k(t, r_1,\dots, r_{n}) + D\sigma^k(t)\mathcal{D}_{r_1, \dots, r_{n}}^{j_1, \dots, j_{n}}X(t),
	\end{aligned}
\end{equation*} 
where 
\begin{equation*}
	P_{j_1,\dots,j_{n}}^k(t, r_1,\dots, r_{n}) = \sum_{\Lambda, \theta \not= 1} D^{\theta}\sigma^k(t)(\mathcal{D}_{r(K_1)}^{j(K_1)}X(t), \cdots, \mathcal{D}_{r(K_{\theta})}^{j(K_{\theta})}X(t)).
\end{equation*} 

Now we are ready to investigate the quantitative dependence on the smoothness of coefficients for the regularity of the density function $p(t,x)$.

\begin{theorem}\label{L1.4.3}
	Assume (H1$^{\prime}$), (H2$^{\prime}$) and (H3). Let $p(t,x)$  be the density function of the solution $X(t)$ to \eqref{e1.3.1}. Then $p(t,x)$ together with its Fr\'echet derivatives with respect to $x$ up to order $N$ is continuous and bounded.
\end{theorem}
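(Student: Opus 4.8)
The plan is to invoke Lemma \ref{L1.2.2} with $G=X(t)$ and $\triangle=\det Q(t)$, exactly as in the proof of Theorem \ref{L1.4.2}. Lemma \ref{L1.4.1} already supplies $(\det Q(t))^{-1}\in L^{\infty-}(\Omega)$, so $\triangle^{-1}\in L^{2Mp}(\Omega)$ for every $p>d$, where $M=\tfrac{(N+1)^2}{2}+\tfrac{3(N+1)}{2}$. Hence the whole statement reduces to establishing the regularity $X(t)\in\mathbb{D}^{N+2,\infty}(\mathbb{R}^d)$ for each $t\in(0,T]$; once this is in hand, Lemma \ref{L1.2.2} immediately yields $p(t,\cdot)\in C_b^N(\mathbb{R}^d)$ together with the a priori bound on $\|p(t,\cdot)\|_{C_b^N(\mathbb{R}^d)}$.

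To prove $X(t)\in\mathbb{D}^{N+2,\infty}(\mathbb{R}^d)$ I would argue by induction on the order $n$ of the Malliavin derivative, showing that for every $1\le n\le N+2$ and every $p\ge 2$ there is a constant $C=C(T,p,n)$, finite thanks to $\xi\in L_d^{\infty-}(\Omega)$, with
\begin{equation*}
	\mathbb{E}[\|\mathcal{D}_{r_1,\dots,r_n}^{j_1,\dots,j_n}X(t)\|^p]\le C,\qquad 0\le r_1,\dots,r_n\le t\le T,
\end{equation*}
uniformly in the indices $1\le j_1,\dots,j_n\le m$. The case $n=1$ is \eqref{e1.3.10-1}, and $n=2,3$ are the computations already carried out in Theorem \ref{L1.4.2}. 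The inductive step differentiates the SDE satisfied by $\mathcal{D}^{n-1}X(t)$: using that $b,\sigma$ are $C^{N+2}$ in the state variable with bounded derivatives of orders $1,\dots,N+2$ (assumption (H2$^{\prime}$)), the $n$-th order derivative solves, for $0\le r_1,\dots,r_n\le t\le T$,
\begin{equation*}
	\begin{aligned}
		\mathcal{D}_{r_1,\dots,r_n}^{j_1,\dots,j_n}X(t)=&\ \int_{r_1\vee\cdots\vee r_n}^{t} B_{j_1,\dots,j_n}(s,r_1,\dots,r_n)\,ds+\sum_{i=1}^{n}\Gamma^{j_i}_{j_1,\dots,\widehat{j_i},\dots,j_n}(r_i,r_1,\dots,\widehat{r_i},\dots,r_n)\\
		&\ +\sum_{k=1}^{m}\int_{r_1\vee\cdots\vee r_n}^{t}\Gamma^{k}_{j_1,\dots,j_n}(s,r_1,\dots,r_n)\,dW_s^k,
	\end{aligned}
\end{equation*}
where a hat denotes omission and $B_{j_1,\dots,j_n}$, $\Gamma^{k}_{j_1,\dots,j_n}$ are the partition-sum expansions introduced before the statement.

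The decisive structural feature is the splitting $B_{j_1,\dots,j_n}=R_{j_1,\dots,j_n}+Db(t)\,\mathcal{D}_{r_1,\dots,r_n}^{j_1,\dots,j_n}X(t)$ and $\Gamma^{k}_{j_1,\dots,j_n}=P^{k}_{j_1,\dots,j_n}+D\sigma^k(t)\,\mathcal{D}_{r_1,\dots,r_n}^{j_1,\dots,j_n}X(t)$: the remainders $R,P^k$ and the $n$ boundary terms $\Gamma^{j_i}_{\dots}$ involve only Malliavin derivatives of $X$ of orders $1,\dots,n-1$, whose $p$-moments are finite for every $p$ by the induction hypothesis. Since each remainder is a finite sum of products of such lower-order derivatives multiplied by the bounded tensors $D^\theta b$, $D^\theta\sigma^k$ ($2\le\theta\le n$), Hölder's inequality gives $\mathbb{E}[\|R_{j_1,\dots,j_n}\|^p]\vee\mathbb{E}[\|P^k_{j_1,\dots,j_n}\|^p]\le C$, exactly as in \eqref{e1.4.2-4}--\eqref{e1.4.2-5}. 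Feeding these bounds, together with the boundedness of $Db$ and $D\sigma^k$, into BDG (Lemma \ref{L1.2.4}) and Hölder and then applying Gronwall's inequality closes the estimate for $\mathcal{D}^nX(t)$. Finally, Lemma \ref{L1.2.1-1} upgrades the pointwise-in-$(r_i,j_i)$ bounds to $\mathbb{E}[\|\mathcal{D}^nX(t)\|^p_{\mathbb{H}^{\otimes n}\otimes\mathbb{R}^d}]\le C$ for all $p$, which gives $X(t)\in\mathbb{D}^{n,\infty}(\mathbb{R}^d)$ and, at $n=N+2$, the required regularity.

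The main obstacle is organizational rather than analytical: one must track the combinatorics of the partition sum $\sum_{\Lambda}$ defining $B_{j_1,\dots,j_n}$ and $\Gamma^k_{j_1,\dots,j_n}$ and verify that, after separating the single term $Db(t)\mathcal{D}^nX(t)$ (resp.\ $D\sigma^k(t)\mathcal{D}^nX(t)$) coming from the trivial partition $\theta=1$, the remainder genuinely contains only strictly-lower-order Malliavin derivatives — this is precisely what makes the induction self-contained. One must likewise identify the $n$ boundary contributions produced when differentiating the initial datum $\sigma^{j_i}(r_i)$ and the lower limits of the previous-order SDE, and confirm that each is an $(n-1)$-st order object. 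The smoothness budget in (H2$^{\prime}$), namely boundedness of $D^nb$ and $D^n\sigma$ up to order $N+2$, is exactly calibrated so that the highest tensor $D^nb$, $D^n\sigma^k$ appearing at step $n\le N+2$ is bounded, which is why no additional regularity of the coefficients is needed.
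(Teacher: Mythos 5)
Your proposal follows essentially the same route as the paper's own proof: reduce via Lemmas \ref{L1.2.2} and \ref{L1.4.1} to showing $X(t)\in\mathbb{D}^{N+2,\infty}(\mathbb{R}^d)$, then induct on the order of the Malliavin derivative using the base cases $n=1,2,3$ from \eqref{e1.3.10-1} and Theorem \ref{L1.4.2}, the splittings $B_{j_1,\dots,j_n}=R_{j_1,\dots,j_n}+Db\,\mathcal{D}^{j_1,\dots,j_n}_{r_1,\dots,r_n}X$ and $\Gamma^k_{j_1,\dots,j_n}=P^k_{j_1,\dots,j_n}+D\sigma^k\,\mathcal{D}^{j_1,\dots,j_n}_{r_1,\dots,r_n}X$ with lower-order remainders controlled by the induction hypothesis, BDG/H\"older/Gronwall to close the estimate, and Lemma \ref{L1.2.1-1} to upgrade componentwise bounds to $\mathbb{H}^{\otimes n}$-norm bounds. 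The argument is correct and matches the paper's proof in both structure and key estimates.
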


\begin{proof} 
	First, we will apply mathematical induction to prove that for any $1 \leq n \leq N+2$ and $1 \leq j_1,\dots,j_n \leq m$, the $(j_1, \dots, j_n)$-th component $\mathcal{D}_{r_1, \dots, r_n }^{j_1, \dots, j_n}X(t) = \mathcal{D}_{r_1, \dots, r_n }^{j_1, \dots, j_n}X(t)\mathbbm{1}_{\{r_1,\dots, r_n \leq t\}}$ $(r_1, \dots, r_n \in [0,T])$ of the $n$-th order Malliavin derivative $\mathcal{D}^nX(t)$ satisfies an SDE for $0 \leq r_1,\dots, r_n \leq t \leq T$ 
	\begin{equation}\label{e1.4.10}
		\begin{aligned}
			\mathcal{D}_{r_1, \dots, r_n }^{j_1, \dots, j_n}X(t) = 
			&\ \int_{r_1 \vee \cdots \vee r_n}^{t} B_{j_1,\dots,j_n}(s, r_1, \dots, r_n) ds \\
			&\ +\sum_{l = 1}^{n} \Gamma^{j_{l}}_{j_1,\dots, j_{l-1}, j_{l+1}, \dots, j_n}(r_{l}, r_1,\dots,r_{l -1}, r_{l+1}, \dots, r_n)\\
			&\ +\sum_{k=1}^{m} \int_{r_1 \vee \cdots \vee r_n}^{t} \Gamma_{j_1,\dots,j_n}^{k}(s, r_1, \dots, r_n) dW_s^k,
		\end{aligned}
	\end{equation}     
	and for any $p \geq 2$ 
	\begin{equation}\label{e1.4.10-1}
		\begin{aligned}
			\mathbb{E}[\|\mathcal{D}_{r_1, \dots, r_n }^{j_1, \dots, j_n}X(t) \|^p] \leq C\quad \text{and } X(t) \in \mathbb{D}^{n, p}(\mathbb{R}^d),
		\end{aligned}
	\end{equation}
	where $C>0$ is a constant.
	
	In fact, Theorem \ref{L1.4.2} implies that \eqref{e1.4.10} and \eqref{e1.4.10-1} hold for $n=1,2,3$. We suppose that \eqref{e1.4.10} and \eqref{e1.4.10-1} are valid  for $ 3 \leq n < N+2$, and  we will show these  hold true for $n+1$. 
	
	Taking Malliavin derivative $\mathcal{D}_r^j$ on $B_{j_1, \dots, j_n}(t,r_1,\dots,r_n)$ and $\Gamma_{j_1, \dots, j_n}^k(t,r_1,\dots,r_n)$, respectively, yields
	\begin{equation*}
		\begin{aligned}
			\mathcal{D}_r^j B_{j_1, \dots, j_n}(t,r_1,\dots,r_n) &\ = B_{j_1,\dots,j_n,j}(t, r_1,\dots,r_n, r),
			\\
			\mathcal{D}_r^j \Gamma_{j_1, \dots, j_n}^k(t,r_1,\dots,r_n) &\ = \Gamma_{j_1,\dots,j_n,j}^k(t, r_1,\dots,r_n, r).
		\end{aligned}
	\end{equation*}  
	For any $1\leq j_{n+1} \leq m$, take Malliavin derivative $\mathcal{D}_{r_{n+1}}^{j_{n+1}}$ on both sides of \eqref{e1.4.10}, then we have for $0 \leq r_1, \dots, r_{n+1} \leq t \leq T$ 
	\begin{equation*}
		\begin{aligned}
			&\ \mathcal{D}_{r_1, \dots, r_{n+1} }^{j_1, \dots, j_{n+1}}X(t) = \mathcal{D}_{r_{n+1}}^{j_{n+1}}\mathcal{D}_{r_1, \dots, r_n }^{j_1, \dots, j_n}X(t) 
			\\
			= &\ \int_{ r_1 \vee \cdots \vee r_{n+1}}^{t} B_{j_1,\dots, j_{n+1}}(s, r_1, \dots, r_{n+1}) ds\\
			&\ +\sum_{l = 1}^{n} \Gamma^{j_{l}}_{j_1,\dots, j_{l-1}, j_{l+1}, \dots, j_n, j_{n+1}}(r_{l}, r_1,\dots,r_{l -1}, r_{l+1}, \dots, r_n, r_{n+1})\\
			&\ + \Gamma_{j_1,\dots,j_n}^{j_{n+1}}(r_{n+1}, r_1, \dots, r_n)+\sum_{k=1}^{m} \int_{r_1 \vee \cdots \vee r_{n+1}}^{t} \Gamma_{j_1,\dots, j_{n+1}}^{k}(s, r_1, \dots, r_{n+1}) dW_s^k
			\\
			=&\ \int_{r_1 \vee \cdots \vee r_{n+1}}^{t} B_{j_1,\dots,j_{n+1}}(s, r_1, \dots, r_{n+1}) ds \\ 
			&\ +\sum_{l = 1}^{n+1} \Gamma^{j_{l}}_{j_1,\dots, j_{l-1},j_{l+1}, \dots, j_{n+1}}(r_{l}, r_1,\dots,r_{l -1}, r_{l+1}, \dots, r_{n+1})\\
			&\ +\sum_{k=1}^{m} \int_{r_1 \vee \cdots \vee r_{n+1}}^{t} \Gamma_{j_1,\dots,j_{n+1}}^{k}(s, r_1, \dots, r_{n+1}) dW_s^k, 
		\end{aligned}
	\end{equation*}
	which means \eqref{e1.4.10} holds true for $n+1$. 
	
	Since $R_{j_1,\dots,j_{n+1}}(t, r_1,\dots, r_{n+1})$ and $\Gamma^{j_{l}}_{j_1,\dots, j_{l-1}, j_{l+1}, \dots, j_{n+1}}(r_{l}, r_1,\dots,r_{l -1}, r_{l+1}, \dots, r_{n+1})$ only depend on  the Fr\'echet derivatives  $D^{\theta}b(s)$ for $1\leq\theta\leq n+1$ and the Malliavin derivatives $\mathcal{D}^{\alpha}X(t)$ for $\alpha \leq n$, by (H2$^\prime$) and \eqref{e1.4.10-1}, we have 
	\begin{equation*}
		\begin{aligned}
			& \mathbb{E}[\|R_{j_1,\dots,j_{n+1}}(t, r_1,\dots, r_{n+1})\|^p] \leq C,\\
			& \mathbb{E}[\| \Gamma^{j_{l}}_{j_1,\dots, j_{l-1}, j_{l+1}, \dots, j_{n+1}}(r_{l}, r_1,\dots,r_{l -1}, r_{l+1}, \dots, r_{n+1})\|^p] \leq C
		\end{aligned}
	\end{equation*}
	for some constant $C >0$.  Then, we obtain
	\begin{equation*}
		\mathbb{E}[\|B_{j_1,\dots,j_{n+1}}(s, r_1,\dots, r_{n+1})\|^p]
		\leq   C(1+\mathbb{E}[\|\mathcal{D}_{r_1, \dots, r_{n+1}}^{j_1, \dots, j_{n+1}}X(s)\|^p]).
	\end{equation*}
	Similarly, we also have 
	\begin{equation*}
		\mathbb{E}[\|\Gamma^k_{j_1,\dots,j_{n+1}}(s, r_1,\dots, r_{n+1})\|^p]
		\leq  C(1+\mathbb{E}[\|\mathcal{D}_{r_1, \dots, r_{n+1}}^{j_1, \dots, j_{n+1}}X(s)\|^p]).
	\end{equation*} 
	Repeating the proof for \eqref{e1.4.2-3} leads to   
	\begin{align*}
		&\ \mathbb{E}[\|\mathcal{D}_{r_1, \dots, r_{n+1}}^{j_1, \dots, j_{n+1}}X(t)\|^p]
		\leq  C\int_{r_1 \vee \cdots \vee r_{n+1}}^{t} \mathbb{E}[\|B_{j_1,\dots,j_{n+1}}(s, r_1,\dots, r_{n+1})\|^p] ds\\
		&\ +C + C\sum_{k=1}^{m} \int_{r_1 \vee \cdots \vee r_{n+1}}^{t} \mathbb{E}[\| \Gamma_{j_1,\dots,j_{n+1}}^k(s, r_1,\dots, r_{n+1})\|^p] ds.
	\end{align*}
	By  Gronwall's inequality, we obtain 
	\begin{equation*}\label{e1.4.13}
		\mathbb{E}[\|\mathcal{D}_{r_1, \dots, r_{n+1}}^{j_1, \dots, j_{n+1}}X(t)\|^p] \leq  C,
	\end{equation*} 
	where $C > 0 $ is a constant.
	
	According to Lemma \ref{L1.2.1-1}, we have 	$\mathbb{E}[\|\mathcal{D}^{n+1}X(t)\|^p_{\mathbb{H}^{\otimes n+1}\otimes \mathbb{R}^d}] \leq C$, thus  $X(t) \in \mathbb{D}^{n+1, p}(\mathbb{R}^d)$ for all $t\in[0,T]$.   
	
	Especially, as $n = N+2$ we have $X(t) \in \mathbb{D}^{N+2, p}(\mathbb{R}^d)$. Due to the arbitrariness of choice for $p$, we obtain $X(t) \in \mathbb{D}^{N+2, \infty}(\mathbb{R}^d)$, which together with Lemma \ref{L1.4.1} implies that $p(t,\cdot) \in C_b^N(\mathbb{R}^d)$ due to Lemma \ref{L1.2.2}.
\end{proof}

\section{Numerical experiments} 
The difficulties on studying  MVE usually arise from the unknown distribution of its solution. Our study in this paper ensures the existence and proper smoothness of the density function of solutions to MVEs under certain regularity assumptions for coefficients. Thus, the density functions  satisfy corresponding Fokker-Plank  equations, which enable us to independently study their properties and seek  approximations.
In this section, we will present several numerical experiments to illustrate approximations of  density functions obtained by solving Fokker-Plank equations.

\begin{example}\label{E1.5.1}
	Consider a 1-dimensional Mckean Vlasov equation for $t\in[0,1]$
	\begin{equation*}
		\begin{aligned}
			dX(t) &= 0.1(X(t) +\sin t)dt + 0.1\int_{\mathbb{R}^d} \frac{\sin y}{1+y^2} \mu_t(dy)dt + \frac{1}{\sqrt{10}}X(t) dW_t,\\ 
			X(0) &= X_0 \thicksim N(0,1).
		\end{aligned}
	\end{equation*}
	The corresponding Fokker-Planck equation reads
	\begin{equation*}
		\begin{aligned}
			\frac{\partial p(t,x)}{\partial t}&=0.1\frac{\partial ((x+\sin{t}+\int_{\mathbb{R}}\frac{\sin y}{1+y^2}p(t,y)dy)p(t,x))}{\partial x}+\frac15\frac{\partial^2(x^2p(t,x))}{\partial x^2},\\
			p_0(x) &=\frac{1}{\sqrt{2\pi}}e^{-x^2}.
		\end{aligned}
	\end{equation*}
	We apply a finite difference method to solve this equation and obtain an approximate  density function $p(t,x)$, cf.\ Figure \ref{fig1.5.1}.  
	\begin{figure}[h]
		\centering
		\includegraphics[width=0.5\linewidth]{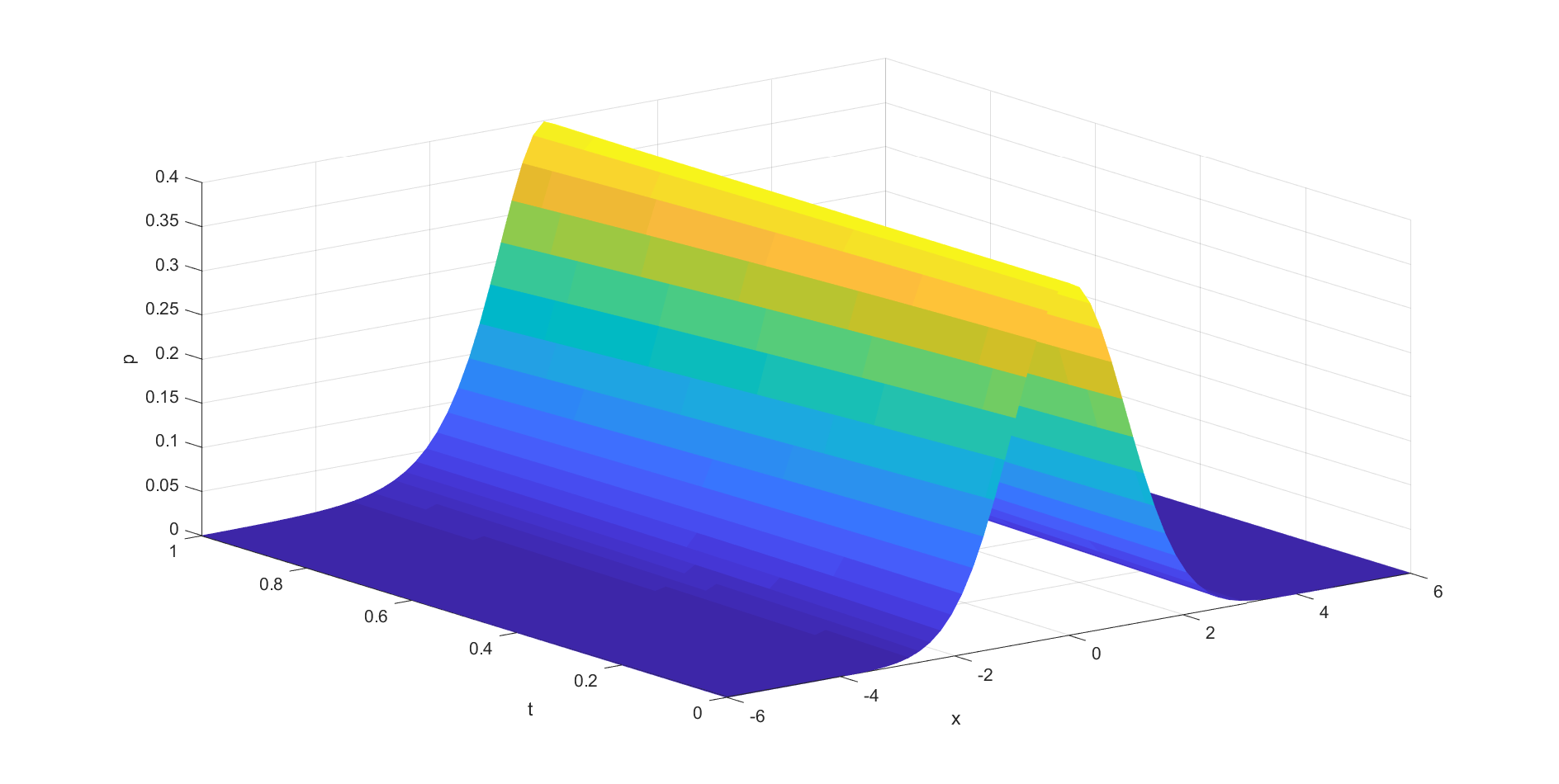}
		\caption{An approximate density function.}
		\label{fig1.5.1}
	\end{figure}
\end{example}    

\begin{example}\label{E1.5.2}
	Consider a 2-dimensional Mckean Vlasov equation 	
	\begin{equation*}
		\begin{aligned}
			dX_1 &\ = \left(\sqrt{(X_1)^2 + (X_2)^2 + 0.4}+\int_{\mathbb{R}^2} \frac{\sin y_1}{1 + y_1^2} \frac{\sin y_2}{1 + y_2^2} \mu_t(dy)\right)dt+\frac{2}{\sqrt{10}}dW^1 + \frac{1}{\sqrt{10}}dW^2, \\
			dX_2 &\ = \left(\sqrt{(X_1)^2 + (X_2)^2 + 0.4}+\int_{\mathbb{R}^2} \frac{\sin y_1}{1 + y_1^2} \frac{\sin y_2}{1 + y_2^2} \mu_t(dy)\right)dt+\frac{1}{\sqrt{10}}dW^1 + \frac{2}{\sqrt{10}}dW^2, \\	
			&\  (X_1(0),X_2(0))  \thicksim N(0, 0.04\cdot I).
		\end{aligned}
	\end{equation*}	
	The corresponding Fokker-Planck equation reads
	\begin{equation*}
		\begin{aligned}
			\frac{\partial p}{\partial t} &\ = 
			-0.1\frac{\partial \big(\sqrt{x_1^2\!+\! x_2^2\!+\! 0.4}\!+\! \int_{\mathbb{R}^2}\frac{\sin y_1}{1+y_1^2}\frac{\sin y_2}{1+y_2^2}p(t,y_1,y_2)dy_1dy_2p\big)
			}{\partial x_1}+\frac14\frac{\partial^2p}{\partial x_1^2}\\
			&\ ~~-0.1\frac{\partial\big(\sqrt{x_1^2\!+\!x_2^2\!+\!0.4}\!+\!\int_{\mathbb{R}^2}\frac{\sin y_1}{1+y_1^2}\frac{\sin y_2}{1+y_2^2}p(t,y_1,y_2)dy_1dy_2 p\big)
			}{\partial x_2}\!+\!\frac14\frac{\partial^2p}{\partial x_2^2}
			\!+\!\frac25\frac{\partial^2p}{\partial x_1\partial x_2}, \\
			&\ p_0(x_1,x_2)=\frac{1}{0.08\pi}e^{-\frac12(\frac{x_1^2}{0.04}+\frac{x_2^2}{0.04})}.
		\end{aligned}
	\end{equation*}
	We approximate the joint density functions for $t\in[0,1]$ and present four of them at different time $t=\frac{1}{4}, \frac{1}{2},\frac{3}{4},1$ in  Figure \ref{figure1.5.2}.
	
	\begin{figure}[h] 
		\centering 
		\subfigure[$t= \frac{1}{4}$.]{
			\includegraphics[width=0.4\textwidth]{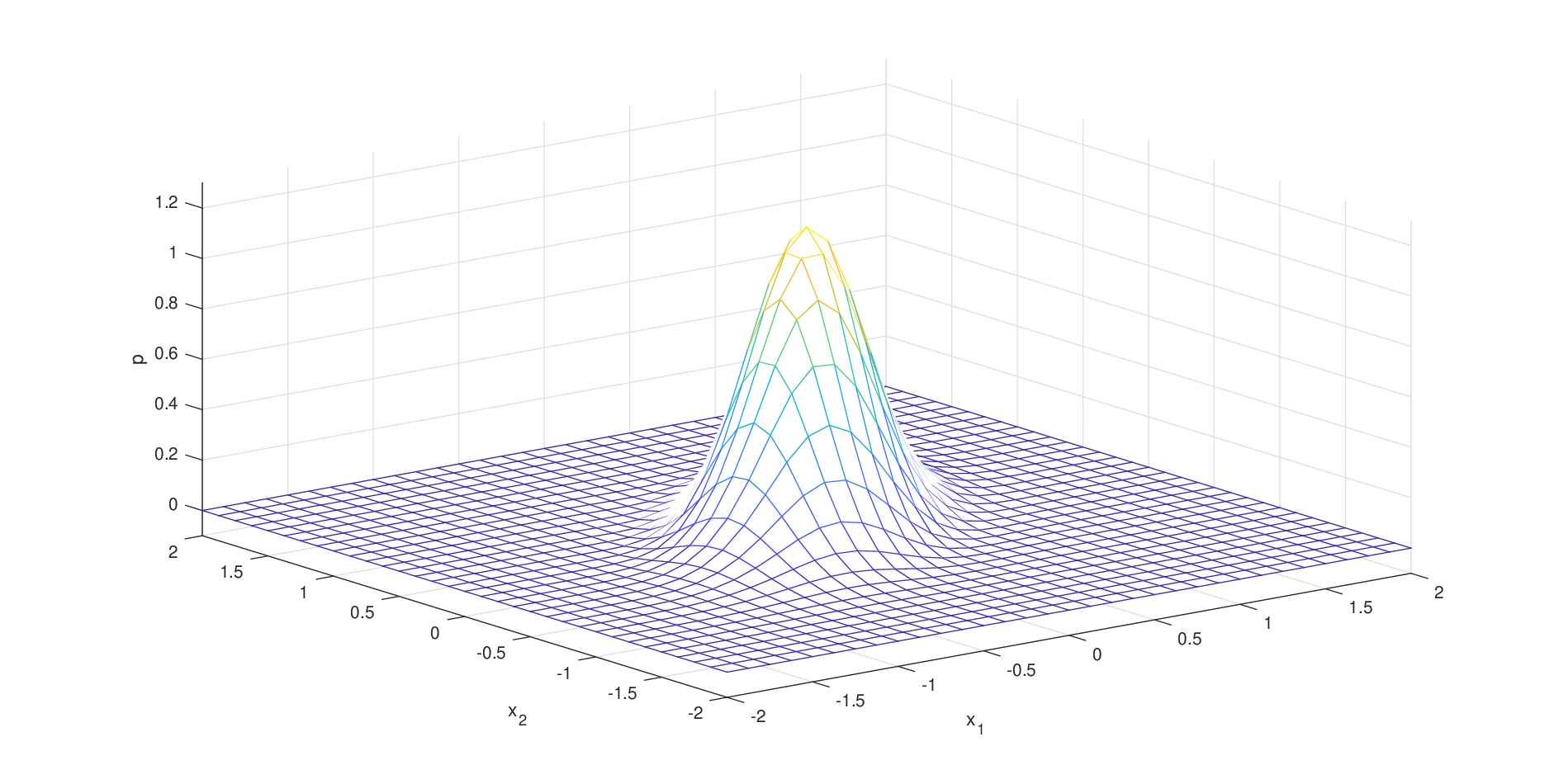}}
		\subfigure[$t=\frac{1}{2}$.]{
			\includegraphics[width=0.4\linewidth]{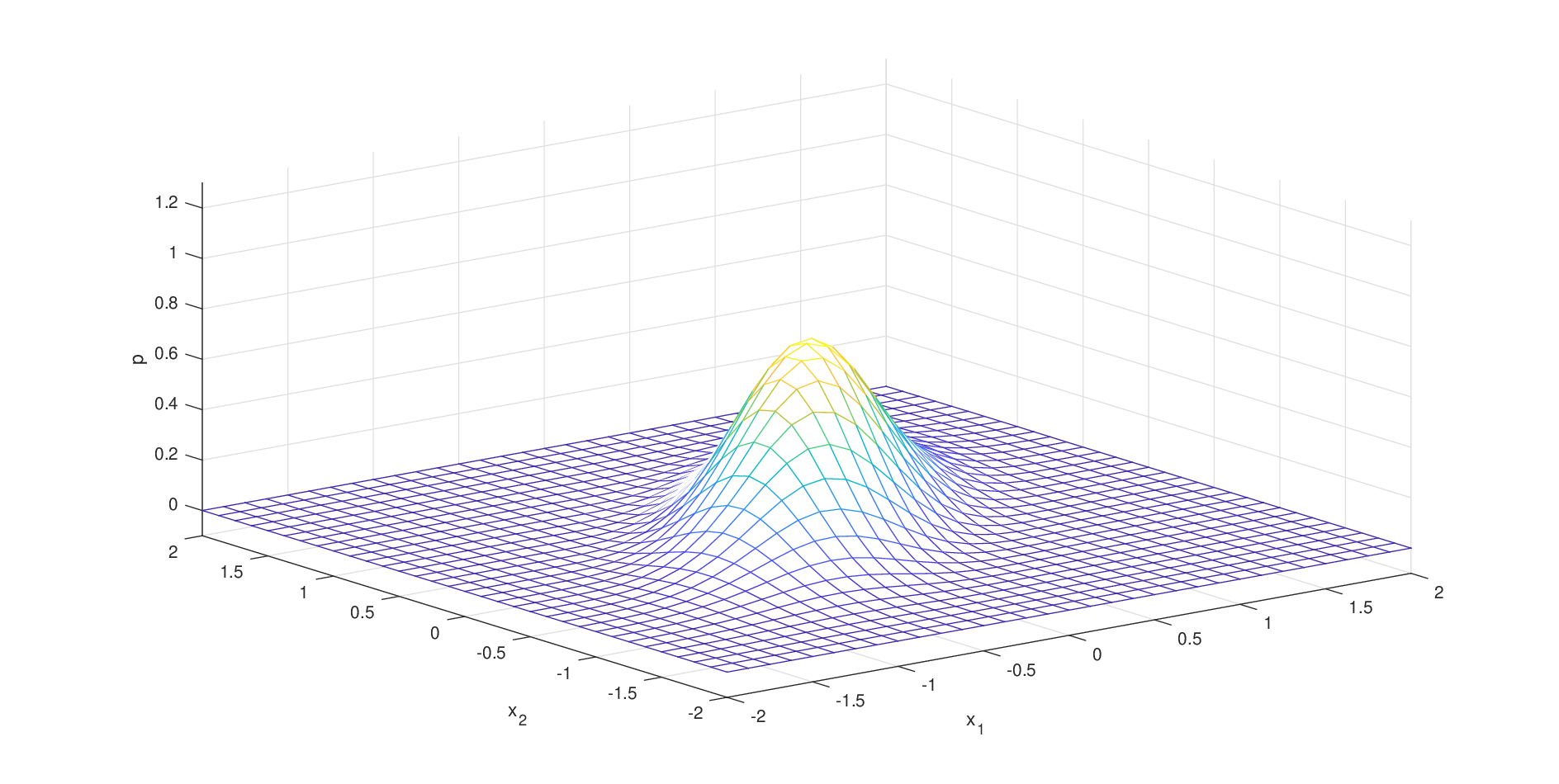}}
		\\
		\subfigure[$t=\frac{3}{4}$.]{
			\includegraphics[width=0.4\linewidth]{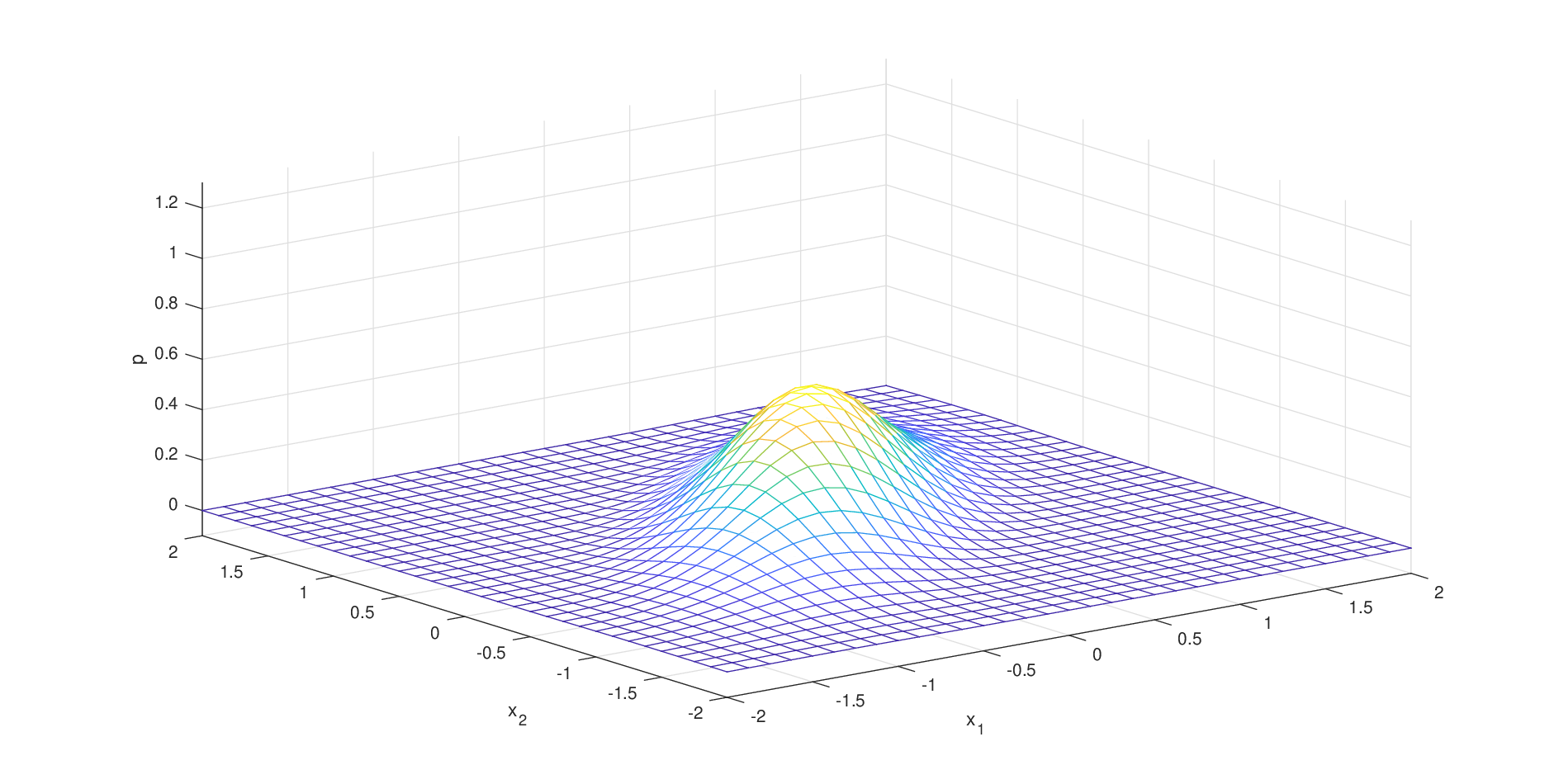}}
		\subfigure[$t=1$.]{
			\includegraphics[width=0.4\linewidth]{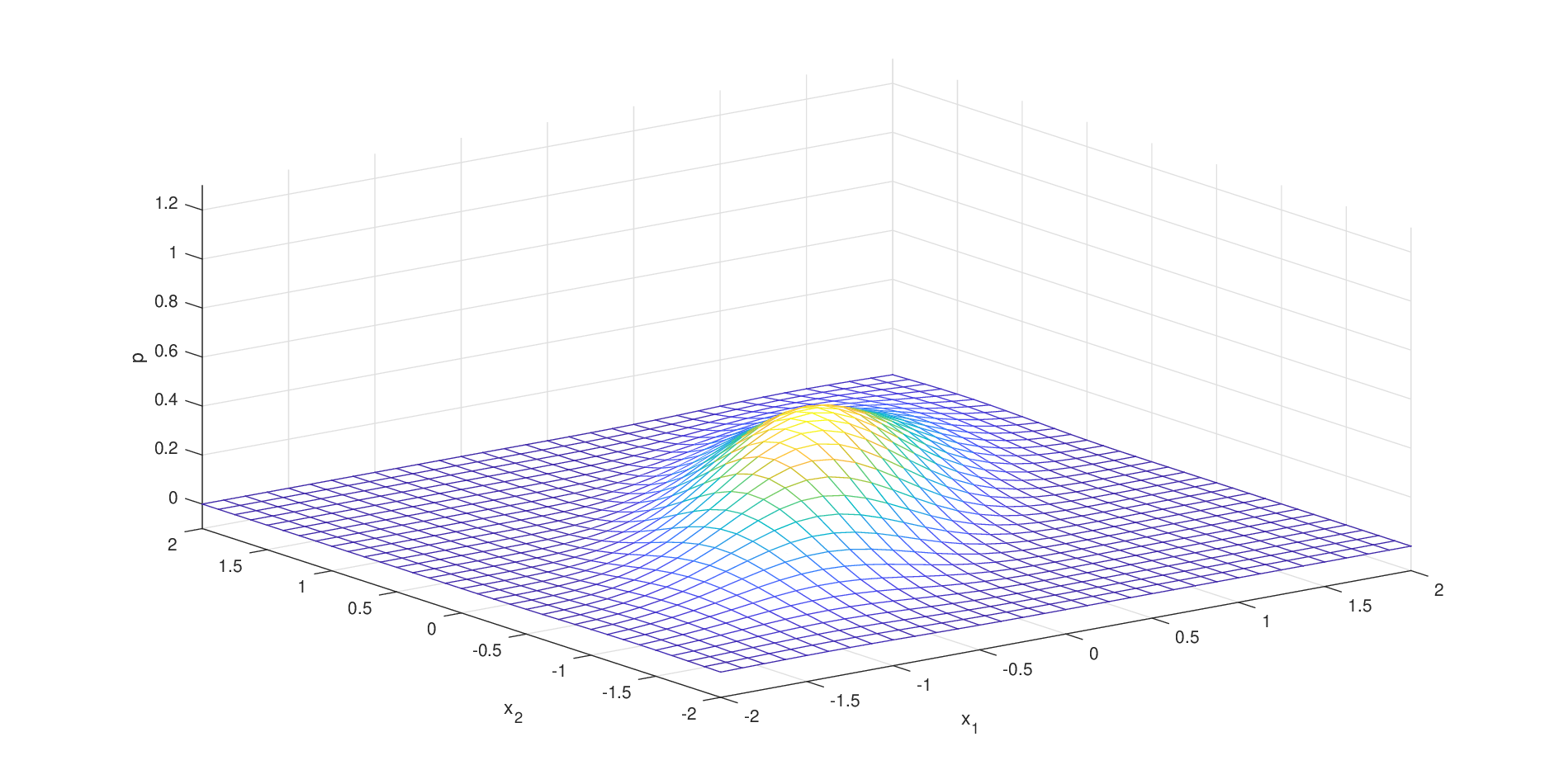}}
		\caption{Development of approximate joint density functions.}
		\label{figure1.5.2}
	\end{figure}
	
\end{example}  


\bmhead{Acknowledgments}
This work is supported by Department of Science and Technology of Jilin Province (20240301017GX); National Natural Science Foundation of China (12171199).

\section*{Declarations}
The authors declare that there is no conflict of interest.

\clearpage
\bibliography{ckwx.bib}

\end{document}